\newtheorem{theorem}{Theorem}[section]
\newtheorem{lemma}{Lemma}[section]
\newtheorem{proposition}[theorem]{Proposition}
\definecolor{link-color}{rgb}{0.15,0.4,0.15}
\newcommand{\RR}{\mathbb{R}}
\newcommand{\NN}{\mathbb{N}}
\newcommand{\CC}{\mathbb{C}}
\newcommand{\cdotsc}{,\dotsc,}
\newcommand{\stproca}[1]{\left(#1\right)_{t \ge 0}}
\newcommand{\stproc}[1]{\stproca{#1_t}}
\DeclareMathOperator{\rRe}{Re}
\renewcommand{\Re}{\rRe}
\newcommand{\iu}{\mathrm{i}} 
\newenvironment{eqnarr}{\begin{IEEEeqnarray}{rCl}}{\end{IEEEeqnarray}\ignorespacesafterend}
\renewcommand{\eqref}[1]{\hyperref[#1]{(\ref*{#1})}}
\newcommand{\rhohat}{\hat{\rho}}
\newcommand{\GGt}{(\mathcal{G}_t)_{t \ge 0}}
\DeclareMathOperator{\diag}{diag}
\newcommand{\Had}{\circ}
\DeclareMathOperator{\sgn}{sgn}
\newcommand{\dd}{\mathrm{d}}
\newcommand{\for}{\qquad}
\newcommand{\define}{\emph}
\newcommand{\abs}[1]{\lvert #1 \rvert}
    \def\beq{\begin{eqnarr}}
    \def\eeq{\end{eqnarr}}
    \def\beqq{\begin{eqnarray*}} 
    \def\eeqq{\end{eqnarray*}} 
\newcommand*{\pref}[1]{\hyperref[#1]{(\ref*{#1})}}
\newcommand*{\refpref}[2]{\hyperref[#2]{\ref*{#1}(\ref*{#2})}}
\title{Conditioned real self-similar Markov processes}
\author{Andreas E. Kyprianou\footnote{Department of Mathematical Sciences, University of Bath, Claverton Down, Bath, BA2 7AY, UK. Email: \texttt{a.kyprianou@bath.ac.uk}}, \ V{\'\i }ctor M. Rivero\footnote{CIMAT A. C.,
Calle Jalisco s/n,
Col. Valenciana,
A. P. 402, C.P. 36000,
Guanajuato, Gto.,
Mexico.
Email: \texttt{rivero@cimat.mx}}, \ Weerapat Satitkanitkul\footnote{Department of Mathematical Sciences, University of Bath, Claverton Down, Bath, BA2 7AY, UK. Email: \texttt{ws250@bath.ac.uk}}}
\date{\today}
\begin{document}
\maketitle
\begin{abstract}
\noindent In recent work, Chaumont et al. \cite{CPR} showed that is possible to condition a stable process with index $\alpha\in(1,2)$ to avoid the origin. Specifically, they describe a new Markov process which is the   Doob $h$-transform of a stable process and which arises from a limiting procedure in which the stable process is conditioned  to have avoided the origin at later and later times. A stable process is a particular example of a real self-similar Markov process (rssMp) and we develop the idea of such conditionings further  to the class of rssMp. Under appropriate conditions, we show that the specific case of conditioning to avoid the origin corresponds to a classical Cram\'er-Esscher-type transform to the  Markov Additive Process (MAP) that underlies the Lamperti-Kiu representation of a rssMp. In the same spirit, we show that the notion of conditioning a rssMp to continuously absorb at the origin also fits the same mathematical framework. In particular, we characterise the stable process conditioned to continuously absorb at the origin when $\alpha\in(0,1)$. Our results also complement related work for positive self-similar Markov processes in \cite{CR}.
\end{abstract}

\section{Introduction}
This work concerns conditionings of real self-similar Markov processes (rssMp) and so we start by characterising this class of stochastic processes.

A rssMp with {\it self-%
similarity index} $\alpha > 0$ is a standard Markov process $X = \stproc{X}$  (in the sense of \cite{BG-mppt})
with probability laws
$(\mathbb{P}_x)_{x \in \RR}$, which satisfies the
{\it scaling property} that for all $x \in \RR \setminus \{0\}$
and $c > 0$,
\[ \text{the law of }(c X_{t c^{-\alpha}})_{t \ge 0}
  \text{ under } \mathbb{P}_x \text{ is } \mathbb{P}_{cx} . \]

The structure of real self-similar Markov processes has been investigated
by \cite{Chy-Lam} in the symmetric case, and \cite{CPR} in general.
Here, we give an interpretation of these authors' results
in terms of Markov additive process (MAP) with a two-state modulating Markov chain and therefore we make an immediate digression to introduce such processes.

\subsection{Markov Additive Processes}\label{ss:MAP}

Let $E$ be a finite state space and $\GGt$ a standard
filtration. 
A c\`adl\`ag process $(\xi,J)$ in $\mathbb{R} \times E$
with law $\mathbf{P}$ is called a
\define{Markov additive process (MAP)} with respect to $\GGt$
if $(J(t))_{t \ge 0}$ is a continuous-time Markov chain in $E$, and
the following property is satisfied,
for any $i \in E$, $s,t \ge 0$:
\begin{eqnarray}
\label{e:MAP}
& \text{ given $\{J(t) = i\}$,
the pair $(\xi(t+s)-\xi(t), J(t+s))$ is independent of
$\mathcal{G}_t$,}\notag\\
&\text{ and has the same distribution as $(\xi(s)-\xi(0), J(s))$
given $\{J(0) = i\}$.}
\end{eqnarray}

Aspects of the theory of Markov additive processes
are covered in a number of texts, among them
\cite{Asm-rp1} and \cite{Asm-apq2}. More classical work includes \cite{Cinlar1, Cinlar2, AS} amongst others.
We will mainly use the notation of
\cite{Iva-thesis},
where it was principally assumed that 
$\xi$ is spectrally negative; the results which
we quote are valid without this hypothesis, however.

Let us introduce some notation.
For $x\in\mathbb{R}$,  write $\mathbf{P}_{x,i} = \mathbf{P}( \cdot \,\vert\, \xi(0) = x, J(0) = i)$.
If $\mu$ is a probability distribution on $E$, we write
$\mathbf{P}_{x, \mu}  
  = \sum_{i \in E} \mu_i \mathbf{P}_{x,i}$. 
We adopt a similar convention
for expectations.

It is well-known that a Markov additive process $(\xi,J)$ also satisfies
\eqref{e:MAP} with $t$ replaced by a stopping time, albeit on the event that the stopping time is finite. The following proposition gives a characterisation of MAPs in terms of a mixture of L\'evy processes, a Markov chain and a family of additional jump distributions;
see \cite[\S XI.2a]{Asm-apq2} and \cite[Proposition 2.5]{Iva-thesis}.

\begin{proposition}
  The pair $(\xi,J)$ is a Markov additive process if and only if, for each $i,j\in E$, 
  there exist a sequence of iid L\'evy processes
  $(\xi_i^n)_{n \ge 0}$ and  a sequence of iid random variables
  $({\Delta}_{i,j}^n)_{n\ge 0}$, independent
  of the chain $J$, such that, if $\sigma_0 = 0$
  and $(\sigma_n)_{n \ge 1}$ are the
  jump times of $J$,  then the process $\xi$ has the representation
  \[ \xi(t) = \mathbf{1}_{(n > 0)}( \xi(\sigma_n -) + {\Delta}_{J(\sigma_n-), J(\sigma_n)}^n) + \xi(J(\sigma_n))^n(t-\sigma_n),
    \for t \in [\sigma_n, \sigma_{n+1}),\, n \ge 0. \]
\end{proposition}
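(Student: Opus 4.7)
The plan is to handle the two directions separately. The ``if'' direction is a direct verification: assuming the representation, fix $t \ge 0$ and work on $\{J(t)=i\}$. The Markov property of $J$ at $t$ gives that $(J(t+s))_{s \ge 0}$ has the same law as $J$ started at $i$ and is independent of $\mathcal{G}_t$, while the stationary independent increments of each Lévy process $\xi^n_j$, the iid-in-$n$ structure, and the assumed independence of the families $(\xi^n_j)$ and $(\Delta^n_{j,k})$ from $J$ ensure that the post-$t$ assembly of $\xi$ is measurable with respect to fresh, independent random ingredients and the future of $J$. A short check then identifies the joint law of $(\xi(t+s)-\xi(t), J(t+s))$ with that of $(\xi(s)-\xi(0), J(s))$ under $\mathbf{P}_{0,i}$, which is exactly \eqref{e:MAP}.

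The substantive content is the ``only if'' direction, which I would prove by iterating the strong Markov property of $(\xi,J)$ at the jump times $\sigma_1 < \sigma_2 < \cdots$ of $J$, all of which are stopping times of $\GGt$. Conditional on $\mathcal{G}_{\sigma_n}$ and on $\{J(\sigma_n)=i\}$, the shifted process $(\xi(\sigma_n+\cdot)-\xi(\sigma_n), J(\sigma_n+\cdot))$ has law $\mathbf{P}_{0,i}$ and is independent of the past. Within the sojourn $[\sigma_n,\sigma_{n+1})$ the chain is constantly equal to $i$, and applying \eqref{e:MAP} at deterministic times inside this interval forces the increments of $\xi$ there to be stationary and independent with law depending only on $i$; one then extends to a genuine Lévy process $\xi^n_i$ on $[0,\infty)$ by appending auxiliary independent randomness. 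The jump random variables are defined by setting $\Delta^n_{J(\sigma_n-),J(\sigma_n)} := \xi(\sigma_n)-\xi(\sigma_n-)$; the strong Markov property together with \eqref{e:MAP} shows that, for each fixed pair $(i,j)$, the conditional law of this jump given the past depends only on the pair $(J(\sigma_n-),J(\sigma_n))$, and is iid across $n$ and jointly independent of $J$ and of the Lévy pieces.

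The main obstacle is the passage from MAP behaviour on the random sojourn interval $[\sigma_n,\sigma_{n+1})$ to a Lévy process $\xi^n_i$ defined on all of $[0,\infty)$ and independent of the sojourn length $\sigma_{n+1}-\sigma_n$. The key tool here is the lack-of-memory of the exponentially distributed sojourn: conditioning on $\{\sigma_{n+1}-\sigma_n > t\}$ has strictly positive probability and decouples the evolution of $\xi$ on $[\sigma_n,\sigma_n+t]$ from the clock governing $J$, so one can identify the observed restriction with that of a canonical Lévy law and, enlarging the probability space if necessary, build the full $\xi^n_i$. A related bookkeeping issue is checking that the augmented $\xi^n_i$'s for different $n$ are genuinely iid and jointly independent of $J$ and of the $\Delta$'s; this follows by induction, since at each step the strong Markov property at $\sigma_n$ yields a fresh post-$\sigma_n$ copy independent of $\mathcal{G}_{\sigma_n}$.
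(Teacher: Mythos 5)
The paper does not supply a proof of this proposition; it is quoted with citations to Asmussen \cite[\S XI.2a]{Asm-apq2} and Ivanovs \cite[Proposition 2.5]{Iva-thesis}, so there is no in-text argument to compare against. Your blind sketch follows the standard route used in those references, and its structure (verification of \eqref{e:MAP} for the ``if'' direction; strong Markov at the jump times $\sigma_n$, lack-of-memory of the exponential holding times, and extension to a full L\'evy process for the ``only if'' direction) is the right one.

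Two points where you are a bit quick in the ``only if'' direction. First, \eqref{e:MAP} conditions on $\{J(t)=i\}$ at a single deterministic time, whereas the L\'evy structure must be extracted on the event $\{\sigma_{n+1}-\sigma_n>t\}$, which is strictly smaller than $\bigcap_m\{J(t_m)=i\}$ for any finite partition $\{t_m\}\subset[0,t]$. Getting from ``independent stationary increments given $J$ equals $i$ at the partition points'' to ``independent stationary increments given no jump on $[0,t]$'' requires passing to the limit over refining partitions, using right-continuity of $J$; this should be stated rather than absorbed into ``applying \eqref{e:MAP} at deterministic times.'' Second, the extension of the observed path on $[\sigma_n,\sigma_{n+1})$ to a bona fide L\'evy process $\xi^n_i$ on $[0,\infty)$ that is \emph{independent} of the sojourn length needs a consistency check: one must verify that, for every $s>0$, the conditional law of $(\xi(\sigma_n+u)-\xi(\sigma_n))_{u\le s}$ given $\{\sigma_{n+1}-\sigma_n>s\}$ does not depend on $s$ (this is precisely where lack-of-memory enters), after which an enlargement of the probability space and a Kolmogorov consistency argument complete the construction. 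You gesture at both of these, so the plan is sound, but a full write-up should make them explicit. The bookkeeping you describe for the $\Delta^n_{i,j}$ and for joint independence from $J$ via induction at the $\sigma_n$ is correct.
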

For each $i \in E$, it will be convenient to define $\xi_i$ as a L\'evy process  whose distribution is the common
  law of the $\xi_i^n$ processes in the above representation; and similarly, for each $i,j \in E$, define ${\Delta}_{i,j}$ to
  be a random variable having the common law of the ${\Delta}_{i,j}^n$ variables.

Henceforth, we confine ourselves
to irreducible (and hence ergodic) Markov chains $J$.
Let the state space $E$ be the finite set $\{1 \cdotsc N\}$, for some $N \in \NN$.
Denote the transition rate matrix of the chain $J$ by
${\boldsymbol{Q}} = (q_{i,j})_{i,j \in E}$.
For each $i \in E$, the Laplace exponent of the L\'evy process $\xi_i$
will be written $\psi_i$. To be more precise, for all $z\in\mathbb{C}$ for which it exists, 
\[
\psi(z): = \log\int_\mathbb{R}{\rm e}^{zx}{\rm P}(\xi(1)\in{\rm d}x).
\]
For each pair of $i,j \in E$,
define
the Laplace transform $G_{i,j}(z) = \mathrm{E}[{\rm e}^{z {\Delta}_{i,j}}]$
 of the jump
distribution ${\Delta}_{i,j}$,
whenever this exists. Write ${\boldsymbol{G}}(z)$ for the $N \times N$ matrix
whose $(i,j)$-th element is $G_{i,j}(z)$. 
We will adopt the convention that ${\Delta}_{i,j} = 0$ if
$q_{i,j} = 0$, $i \ne j$, and also set ${\Delta}_{ii} = 0$ for each $i \in E$.

The multidimensional analogue of the Laplace exponent of a L\'evy process is
provided by the matrix-valued function
\begin{equation}\label{e:MAP F}
 {\boldsymbol F}(z) = \diag( \psi_1(z) \cdotsc \psi_N(z))
  + {\boldsymbol{Q}} \Had {\boldsymbol{G}}(z),
\end{equation}
for all $z \in \CC$ such that the elements on the right are defined,
where $\Had$ indicates elementwise multiplication, also called
Hadamard multiplication.
It is then known
that
\[ \mathbf{E}_{0,i}( {\rm e}^{z \xi(t)} ; J(t)=j) = \bigl({\rm e}^{\boldsymbol{F}(z) t}\bigr)_{i,j} , \for i,\,j \in E,  t\geq 0,\]
such that the right-hand side of the equality is defined.
For this reason, $\boldsymbol{F}$ is called the \define{matrix exponent} of
the MAP $(\xi, J)$.

\bigskip

The role of $\boldsymbol{F}$ is analogous to the role of the Laplace exponent of a L\'evy process. Similarly in this respect,  one might also regard the {\it leading eigenvalue} associated to $\boldsymbol F$  (sometimes referred to as the 
{\it Perron--Frobenius eigenvalue},
see \cite[\S XI.2c]{Asm-apq2} and \cite[Proposition 2.12]{Iva-thesis})  as also playing this role.

\begin{proposition}
Suppose that $z \in \mathbb{R}$ is such that ${\boldsymbol F}(z)$ is defined.
Then, the matrix ${\boldsymbol F}(z)$ has a real simple eigenvalue $\chi(z)$,
which is larger than the real part of all its other eigenvalues.
Furthermore, the
corresponding
right-eigenvector ${\boldsymbol v}= (v_1(z), \cdots, v_N(z))$ may be chosen so that
$v^z_i > 0$ for every $i = 1,\cdots N$,
and normalised such that
\begin{eqnarray}
  {\boldsymbol \pi} \cdot {\boldsymbol v}(z) &=& 1 \label{e:h norm}
\end{eqnarray}
where $  {\boldsymbol \pi} = (\pi_1, \cdots, \pi_N)$ is the equilibrium distribution of
the chain $J$.
\end{proposition}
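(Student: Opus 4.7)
The plan is to reduce to the classical Perron--Frobenius theorem for irreducible non-negative matrices by means of a spectral shift. The first observation is that for $i \ne j$ the off-diagonal entries $(\boldsymbol{F}(z))_{i,j} = q_{i,j} G_{i,j}(z) = q_{i,j}\, \mathrm{E}[{\rm e}^{z \Delta_{i,j}}]$ are non-negative whenever $z \in \mathbb{R}$ and $\boldsymbol{F}(z)$ is defined: $q_{i,j} \ge 0$ from the definition of the rate matrix, and $G_{i,j}(z) > 0$ because it is the expectation of a strictly positive random variable (and we read it as $0$ precisely on the entries where $q_{i,j}=0$, consistently with the convention $\Delta_{i,j}=0$ made in that case). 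Hence $\boldsymbol{F}(z)$ is a Metzler (essentially non-negative) matrix whose off-diagonal sign pattern is exactly that of $\boldsymbol{Q}$, and in particular it is irreducible because $J$ is.

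Next, I would pick $c>0$ large enough that $\boldsymbol{M}(z) := \boldsymbol{F}(z) + c \boldsymbol{I}$ has all entries non-negative; since adding a multiple of the identity preserves the off-diagonal sign pattern, $\boldsymbol{M}(z)$ is an irreducible non-negative matrix. The classical Perron--Frobenius theorem then applies: there is a simple, real, positive eigenvalue $\mu(z)$ of $\boldsymbol{M}(z)$, strictly greater in modulus (and therefore in real part) than every other eigenvalue, together with a right-eigenvector $\boldsymbol{v}(z)$ whose coordinates are strictly positive. Translating the spectrum back by $-c$, it follows that $\chi(z):=\mu(z)-c$ is a simple real eigenvalue of $\boldsymbol{F}(z)$, that $\Re \lambda < \chi(z)$ for every other eigenvalue $\lambda$ of $\boldsymbol{F}(z)$, and that $\boldsymbol{v}(z)$ remains the corresponding right-eigenvector with strictly positive entries.

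It remains to impose the normalisation \eqref{e:h norm}. Because $J$ is irreducible, its equilibrium distribution $\boldsymbol{\pi}$ has strictly positive components, and $\boldsymbol{v}(z)$ has just been shown to be strictly positive; therefore $\boldsymbol{\pi}\cdot\boldsymbol{v}(z) > 0$, and dividing $\boldsymbol{v}(z)$ by this scalar gives an eigenvector satisfying \eqref{e:h norm}, still with strictly positive entries.

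The only delicate point is the verification that $\boldsymbol{F}(z)$ is genuinely an irreducible Metzler matrix, which rests on $G_{i,j}(z)$ being strictly positive on the support $\{q_{i,j}>0\}$; this uses that $z$ is real (for complex $z$ the exponential $\mathrm{E}[{\rm e}^{z\Delta_{i,j}}]$ can vanish or be negative, and the proposition is correspondingly restricted). Once this is noted, the rest of the argument is a mechanical application of Perron--Frobenius after the shift, and no analytic subtleties arise.
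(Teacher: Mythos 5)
Your argument is the standard one, and in fact the paper itself does not prove this proposition but quotes it from \cite[\S XI.2c]{Asm-apq2} and \cite[Proposition 2.12]{Iva-thesis}; the spectral-shift reduction to classical Perron--Frobenius for irreducible non-negative matrices is exactly what those references do, so the approach matches. One small imprecision worth noting: choosing $c$ merely large enough that $\boldsymbol{F}(z)+c\boldsymbol{I}$ is entrywise non-negative does not by itself yield a primitive matrix, so your intermediate claim that $\mu(z)$ dominates every other eigenvalue strictly in modulus is not immediate from irreducibility alone. It does not matter for the conclusion, however: since $\mu(z)$ is the spectral radius, any other eigenvalue $\lambda$ satisfies $\Re(\lambda)\le|\lambda|\le\mu(z)$, and equality $\Re(\lambda)=\mu(z)$ would force $\Re(\lambda)=|\lambda|$, hence $\lambda$ real and non-negative, hence $\lambda=\mu(z)$, a contradiction; so $\Re(\lambda)<\mu(z)$ strictly for $\lambda\ne\mu(z)$, which is all the proposition asserts after shifting back by $c$. (Alternatively, taking $c$ strictly larger makes the diagonal of $\boldsymbol{M}(z)$ positive, hence $\boldsymbol{M}(z)$ primitive, and the strict modulus dominance you invoked is then also valid.) With that remark the proof is sound.
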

One sees the leading eigenvalue appearing in a number of key results. We give two such below that will be of pertinence later on. The first one is  the strong law of large numbers for $(\xi, J)$, in which the leading eigenvalue plays the same role as the Laplace exponent of a L\'evy process does in analogous result for that setting. The following result is taken from \cite[Proposition 2.10]{Asm-apq2}.

\begin{proposition}
If $\chi'(0)$ is well defined (either as a left or right derivative), then  we have
\begin{align}\label{mmm}
\lim_{t\to\infty}\frac{\xi(t)}{t} = \chi'(0)={\bf E}_{0,{\boldsymbol\pi}}[\xi(1)]: = \sum_{i\in E}{\boldsymbol \pi}_i {\bf E}_{0,i}[\xi(1)]
\end{align}
almost surely.
In that case, there is a trichotomy which dictates whether $\lim_{t\to\infty}\xi(t) =\infty$ almost surely, $\lim_{t\to\infty}\xi(t)   = -\infty$ almost surely or $\limsup_{t\to\infty}\xi(t) = -\liminf_{t\to\infty}\xi(t) = \infty$ accordingly as $\chi'(0)>0$, $<0$ or $=0$, respectively.

\end{proposition}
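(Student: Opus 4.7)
The natural plan is to exploit the regenerative structure of $(\xi,J)$ at successive returns of $J$ to a fixed state, say state $1$, and reduce to the classical strong law for iid sequences. Let $T_0 = 0$ and let $T_n$ be the $n$-th return time of $J$ to state $1$. The strong Markov property together with the defining relation \eqref{e:MAP} shows that the pairs $(T_{n+1}-T_n,\, \xi(T_{n+1}) - \xi(T_n))$, $n \geq 0$, are iid under $\mathbf{P}_{0,1}$ with common law that of $(T_1, \xi(T_1))$. The classical SLLN therefore gives
\[
\frac{\xi(T_n)}{T_n} \;=\; \frac{\xi(T_n)/n}{T_n/n} \longrightarrow \frac{\mathbf{E}_{0,1}[\xi(T_1)]}{\mathbf{E}_{0,1}[T_1]} \quad \text{a.s.}
\]
To pass from the sampled times $T_n$ to arbitrary $t$, one controls the iid sequence of excursion oscillations $Y_n := \sup_{T_n \le t < T_{n+1}}\lvert\xi(t) - \xi(T_n)\rvert$ and argues via Borel--Cantelli that $Y_n/n \to 0$ a.s.

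Next, I would identify the limit with both $\mathbf{E}_{0,\boldsymbol{\pi}}[\xi(1)]$ and $\chi'(0)$ through two computations. On the probabilistic side, the expected occupation time of state $i$ during $[0,T_1)$ under $\mathbf{P}_{0,1}$ equals $\pi_i \mathbf{E}_{0,1}[T_1]$ (by the ergodic theorem for $J$), and the expected number of $i\to j$ transitions in one cycle equals $q_{ij}\pi_i\mathbf{E}_{0,1}[T_1]$, so decomposing $\xi(T_1)$ into its L\'evy pieces and its modulation jumps yields
\[
\frac{\mathbf{E}_{0,1}[\xi(T_1)]}{\mathbf{E}_{0,1}[T_1]} \;=\; \sum_{i\in E}\pi_i\Bigl(\psi_i'(0) + \sum_{j\ne i}q_{ij}\mathbf{E}[\Delta_{ij}]\Bigr) \;=\; \mathbf{E}_{0,\boldsymbol{\pi}}[\xi(1)].
\]
On the spectral side, since $\psi_i(0)=0$ and $G_{ij}(0) = 1$, one has $\boldsymbol{F}(0) = \boldsymbol{Q}$, whence $\chi(0)=0$ and $\boldsymbol{v}(0) = \boldsymbol{1}$, consistent with the normalisation $\boldsymbol{\pi}\cdot\boldsymbol{v}(0) = 1$. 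Differentiating the eigenvalue relation $\boldsymbol{F}(z)\boldsymbol{v}(z) = \chi(z)\boldsymbol{v}(z)$ at $z=0$ and left-multiplying by $\boldsymbol{\pi}$, the contribution $\boldsymbol{\pi}\boldsymbol{Q}\boldsymbol{v}'(0)$ vanishes because $\boldsymbol{\pi}\boldsymbol{Q}=0$, leaving $\chi'(0) = \boldsymbol{\pi}\boldsymbol{F}'(0)\boldsymbol{1}$, which coincides with the right-hand side above.

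For the trichotomy, when $\chi'(0) \neq 0$ the SLLN immediately forces $\xi(t)\to\pm\infty$ according to the sign. When $\chi'(0) = 0$, apart from the degenerate case where $\xi(T_1)\equiv 0$ under $\mathbf{P}_{0,1}$, standard random-walk fluctuation theory applied to the mean-zero random walk $(\xi(T_n))_{n\ge 0}$ yields $\limsup_n \xi(T_n) = -\liminf_n \xi(T_n) = +\infty$ a.s., and the bound $Y_n/n \to 0$ transfers this oscillation to the continuous-time process $\xi(t)$.

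The main obstacle I anticipate is less the strong law itself than verifying the integrability hypotheses that are implicitly packaged into the phrase ``$\chi'(0)$ is well defined as a one-sided derivative''. One must show that this assumption implies finiteness of $\mathbf{E}_{0,1}[\lvert\xi(T_1)\rvert]$ and of $\mathbf{E}[Y_1]$, and that the one-sided derivative of $\chi$ at $0$ really equals $\mathbf{E}_{0,\boldsymbol{\pi}}[\xi(1)]$. A truncation argument combined with the convexity and analyticity properties of the Perron--Frobenius eigenvalue $\chi$ as a function of real $z$ is the cleanest route to closing this gap without appealing to extra moment hypotheses.
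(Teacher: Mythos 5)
The paper does not give a proof of this proposition at all: it is stated as a quotation of Proposition 2.10 in Asmussen's \emph{Applied probability and queues} (citation \texttt{Asm-apq2}), so there is no in-text argument to compare against. Your regenerative-cycle argument --- sampling at the successive return times of $J$ to a reference state, applying the iid strong law, and bridging the gaps with a Borel--Cantelli bound on the within-cycle oscillation --- is precisely the standard route by which this SLLN for Markov additive processes is proved in the literature, and it is in the spirit of what underlies the cited reference. The two identifications you give for the limit are correct and cleanly done: the cycle formula via Kac's occupation-time identity (expected occupation of state $i$ and expected number of $i\to j$ transitions per cycle are both proportional to $\mathbf{E}_{0,1}[T_1]$), and the spectral formula $\chi'(0)=\boldsymbol{\pi}\,\boldsymbol{F}'(0)\,\boldsymbol{1}$ from differentiating $\boldsymbol{F}(z)\boldsymbol{v}(z)=\chi(z)\boldsymbol{v}(z)$ at $z=0$, using $\boldsymbol{v}(0)=\boldsymbol{1}$, $\chi(0)=0$ and $\boldsymbol{\pi}\boldsymbol{Q}=0$, agree with each other and with $\mathbf{E}_{0,\boldsymbol\pi}[\xi(1)]$.

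Two small loose ends, which you yourself partly flag, would need to be tied off in a fully rigorous writeup. First, the integrability package: one must verify that finiteness of the one-sided derivative $\chi'(0)$ forces $\mathbf{E}_{0,1}|\xi(T_1)|<\infty$ and $\mathbf{E}[Y_1]<\infty$ (the latter is what makes $Y_n/n\to 0$ a.s.), and that the one-sided derivative genuinely coincides with $\mathbf{E}_{0,\boldsymbol\pi}[\xi(1)]$ rather than merely being related to it; your suggestion to use convexity and a truncation is the right way in, but it is not trivial and is the substantive content hidden behind the hypothesis. Second, in the trichotomy for $\chi'(0)=0$, the ``degenerate case $\xi(T_1)\equiv 0$'' is not automatically excluded: if $\xi(T_1)\equiv 0$ the embedded random walk is identically zero, and the oscillation claim $\limsup=-\liminf=\infty$ then has to come from the within-cycle behaviour of $\xi$ (i.e.\ from the $Y_n$'s) rather than from random walk fluctuation theory, or else the conclusion must be stated with the genuinely degenerate MAP excluded. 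Neither point undermines the plan; both are details one would want to spell out.
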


The leading eigenvalue also features in the following probabilistic
result, which identifies a martingale (also known as the generalised Wald martingale) and associated exponential change of measure
corresponding to an Esscher-type transformation
of a L\'evy process; cf.\ \cite[Proposition XI.2.4, Theorem XIII.8.1]{Asm-apq2}.

\begin{proposition}
\label{p:mg and com}
Let $\mathcal{G}_{t} = \sigma\{(\xi(s), J(s)): s\leq t\}$, $t\geq 0$, and
\begin{equation} M(t,\gamma) =  {\rm e}^{\gamma (\xi(t)-\xi(0)) - \chi(\gamma)t}
    \frac{v_{J(t)}(\gamma)}{v_{J(0)}(\gamma)} ,
  \for t \ge 0, 
  \label{MAPCOM}\end{equation}
for some $\gamma$ such that $\chi(\gamma)$  is defined.
Then,
  $M(\cdot,\gamma)$ is a unit-mean martingale with respect to $\GGt$. Moreover, under the change of measure 
  \[
  \left.\frac{{\rm d}\mathbf{P}^\gamma_{x,i}}{{\rm d}\mathbf{P}_{x,i}}\right|_{\mathcal{G}_t} = M(t,\gamma),\qquad t\geq 0,
  \]
  the process $(\xi,J)$ remains in the class of MAPs and, where defined, its  characteristic exponent is given by 
  \begin{equation}
  \boldsymbol{F}_\gamma(z) = \boldsymbol{\Delta}_{\boldsymbol v}(\gamma)^{-1}\boldsymbol{F}(z+\gamma)\boldsymbol{\Delta}_{\boldsymbol v}(\gamma) - \chi(\gamma)\mathbf{I}, 
  \label{Esscher}
  \end{equation}
  where $\mathbf{I}$ is the identity matrix and $\boldsymbol{\Delta}_{\boldsymbol v}(\gamma) = {\rm diag}(\boldsymbol{v}(\gamma))$. It is straightforward to deduce that, when it exists,  the associated leading eigenvalue associated to $  \boldsymbol{F}_\gamma(z)$ is given by
  $\chi_\gamma(z) = \chi(z+ \gamma) - \chi(\gamma)$.
  \end{proposition}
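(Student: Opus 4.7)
The plan is to verify the three assertions in order: martingale/unit-mean property, preservation of the MAP structure, and the Esscher-type formula for $\boldsymbol{F}_\gamma$.

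For the martingale property, I would condition on $\mathcal{G}_s$ for $s \le t$ and exploit the MAP property \eqref{e:MAP}. Writing
\[
  M(t,\gamma) = M(s,\gamma)\cdot {\rm e}^{\gamma(\xi(t)-\xi(s))-\chi(\gamma)(t-s)}\,\frac{v_{J(t)}(\gamma)}{v_{J(s)}(\gamma)},
\]
the conditional expectation given $\mathcal{G}_s$ reduces, on the event $\{J(s)=i\}$, to
\[
  M(s,\gamma)\,{\rm e}^{-\chi(\gamma)(t-s)}\,\frac{1}{v_i(\gamma)}\sum_{j\in E}\bigl({\rm e}^{\boldsymbol{F}(\gamma)(t-s)}\bigr)_{i,j}v_j(\gamma),
\]
using the matrix-exponent identity from Section~\ref{ss:MAP}. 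Because $\boldsymbol v(\gamma)$ is a right eigenvector of $\boldsymbol F(\gamma)$ with eigenvalue $\chi(\gamma)$, the inner sum equals ${\rm e}^{\chi(\gamma)(t-s)}v_i(\gamma)$, and the factors cancel to give $M(s,\gamma)$. Setting $s=0$ together with the normalisation $M(0,\gamma)=1$ gives unit mean.

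For the MAP structure under $\mathbf{P}^\gamma_{x,i}$, I would verify \eqref{e:MAP} directly using the multiplicative decomposition of $M$. For bounded measurable $F,H$,
\[
  \mathbf{E}^{\gamma}_{x,i}\!\bigl[F(\mathcal{G}_t)\,H(\xi(t+s)-\xi(t),J(t+s))\bigr] = \mathbf{E}_{x,i}\!\bigl[F(\mathcal{G}_t)\,M(t,\gamma)\,R_t\bigr],
\]
where $R_t$ denotes the conditional expectation of the increment factor of $M$ times $H$ given $\mathcal{G}_t$. Applying the MAP property of $(\xi,J)$ under $\mathbf{P}$, the increment factor is, on $\{J(t)=k\}$, independent of $\mathcal{G}_t$ and distributed as $M(s,\gamma)$ under $\mathbf{P}_{0,k}$. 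Hence $R_t$ equals $\mathbf{E}^{\gamma}_{0,J(t)}[H(\xi(s),J(s))]$, which is exactly the Markov additive property under the new measure.

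For the exponent formula, I would compute directly
\[
  \mathbf{E}^{\gamma}_{0,i}\!\bigl[{\rm e}^{z\xi(t)};J(t)=j\bigr] = \frac{v_j(\gamma)}{v_i(\gamma)}\,{\rm e}^{-\chi(\gamma)t}\,\bigl({\rm e}^{\boldsymbol{F}(z+\gamma)t}\bigr)_{i,j} = \bigl({\rm e}^{\boldsymbol{F}_\gamma(z)t}\bigr)_{i,j},
\]
with $\boldsymbol{F}_\gamma(z)$ as in \eqref{Esscher}. The second equality uses that similarity by $\boldsymbol{\Delta}_{\boldsymbol v}(\gamma)$ passes through the matrix exponential and that $\chi(\gamma)\mathbf{I}$ commutes with everything. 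Since the similarity transformation preserves the spectrum and the $-\chi(\gamma)\mathbf{I}$ term shifts all eigenvalues uniformly, the leading eigenvalue of $\boldsymbol{F}_\gamma$ is $\chi(z+\gamma)-\chi(\gamma)$.

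The only real subtlety, and the step I would write most carefully, is the MAP verification: one must be sure that the multiplicative functional $M(t,\gamma)$ factorises compatibly with the conditional independence structure in \eqref{e:MAP}, so that the change of measure respects the piecewise L\'evy/Markov-chain architecture rather than only reproducing the correct one-dimensional marginals.
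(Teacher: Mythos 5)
Your proof is correct and self-contained. The paper does not actually supply a proof of this proposition — it simply cites \cite[Proposition XI.2.4, Theorem XIII.8.1]{Asm-apq2} — so there is no in-text argument to compare against; your verification supplies the detail that the paper delegates to Asmussen. All three steps are handled soundly: the martingale computation correctly uses the matrix-exponent identity together with $\boldsymbol{F}(\gamma)\boldsymbol{v}(\gamma)=\chi(\gamma)\boldsymbol{v}(\gamma)$ (hence $ {\rm e}^{\boldsymbol{F}(\gamma)u}\boldsymbol{v}(\gamma)={\rm e}^{\chi(\gamma)u}\boldsymbol{v}(\gamma)$) to make the factors cancel; the preservation of the MAP property follows from the multiplicative factorisation of $M$ over the increment plus the tower property, and by specialising $H$ to a function of $J(t+s)$ alone one also recovers that $J$ remains a Markov chain under $\mathbf{P}^\gamma$; and the identification of $\boldsymbol{F}_\gamma$ via the change of measure, the commutation of $\chi(\gamma)\mathbf{I}$, and the similarity transform is a direct calculation. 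The closing observation on the leading eigenvalue is also right: similarity preserves the spectrum, the scalar shift $-\chi(\gamma)\mathbf{I}$ moves every eigenvalue by the same amount, and the Perron eigenvector transforms as $\boldsymbol{v}_\gamma(z) = \boldsymbol{\Delta}_{\boldsymbol v}(\gamma)^{-1}\boldsymbol{v}(z+\gamma)$, which still has strictly positive entries, so the Perron--Frobenius structure is inherited. The only thing you might add for completeness is the one-line remark that $\chi(\gamma)$ being defined ensures $\boldsymbol{F}(\gamma)$ is defined, so all the matrix quantities used are finite; this is built into the hypothesis but worth flagging.
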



 The following properties of $\chi$, lifted from \cite[Proposition 3.4]{KKPW}, will also prove useful in relating the last two results together.

\begin{proposition}
Suppose that $\boldsymbol F$ is defined in some open interval $D$ of $\RR$.
Then, the leading eigenvalue $\chi$ of $\boldsymbol F$
is smooth and convex on $D$.
\end{proposition}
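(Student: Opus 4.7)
The plan is to treat smoothness and convexity separately, exploiting analyticity of the matrix exponent together with the implicit function theorem for the former, and H\"older's inequality combined with a Perron--Frobenius asymptotic for the latter.

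For smoothness, the starting point is that each $\psi_i$, being the Laplace exponent of a L\'evy process, is real-analytic on the interior of the set where it is finite; similarly each $G_{i,j}(z)=\mathrm{E}[\mathrm{e}^{z\Delta_{i,j}}]$ is real-analytic on the interior of its domain of convergence. Hence the entries of $\boldsymbol{F}(z)$ are real-analytic on $D$, and so is the characteristic polynomial $p(z,\lambda)=\det(\lambda\mathbf{I}-\boldsymbol{F}(z))$ jointly in $(z,\lambda)$. By the Perron--Frobenius proposition stated just above, $\chi(z)$ is a \emph{simple} root of $p(z,\cdot)$, so $\partial_{\lambda}p(z,\chi(z))\neq 0$ on $D$, and the analytic implicit function theorem yields that $\chi$ is real-analytic, and in particular $C^{\infty}$, on $D$.

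For convexity, fix $i\in E$ and, for $z\in D$ and $t\ge 0$, write $f_i(z,t)=\log\mathbf{E}_{0,i}[\mathrm{e}^{z\xi(t)}]=\log\sum_{j\in E}(\mathrm{e}^{\boldsymbol{F}(z)t})_{i,j}$. Applied to $\mathrm{e}^{(\theta z_1+(1-\theta)z_2)\xi(t)}=\mathrm{e}^{\theta z_1\xi(t)}\cdot\mathrm{e}^{(1-\theta)z_2\xi(t)}$ with conjugate exponents $1/\theta$ and $1/(1-\theta)$, H\"older's inequality shows that $z\mapsto f_i(z,t)$ is convex for each fixed $t$. To identify $\chi$ as the limit of $t^{-1}f_i(\cdot,t)$, I would invoke Perron--Frobenius: with the right eigenvector $\boldsymbol{v}(z)$ and a strictly positive left eigenvector $\boldsymbol{u}(z)$ of $\boldsymbol{F}(z)$, one has $\mathrm{e}^{-\chi(z)t}\mathrm{e}^{\boldsymbol{F}(z)t}\to\boldsymbol{v}(z)\boldsymbol{u}(z)^{\top}$ entrywise as $t\to\infty$, so $\sum_{j}(\mathrm{e}^{\boldsymbol{F}(z)t})_{i,j}\sim\mathrm{e}^{\chi(z)t}\,v_i(z)\sum_{j}u_j(z)$ and consequently $t^{-1}f_i(z,t)\to\chi(z)$. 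Since each $t^{-1}f_i(\cdot,t)$ is convex and pointwise limits of convex functions are convex, $\chi$ is convex on $D$.

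The only subtlety worth flagging is that the H\"older step must be carried out inside $D$; but $D$ is an interval, hence convex, so $\theta z_1+(1-\theta)z_2\in D$ whenever $z_1,z_2\in D$ and all moment generating functions involved are finite. A purely analytic alternative would be to differentiate $\chi$ twice via the implicit function theorem and verify $\chi''\ge 0$ from a Collatz--Wielandt characterisation of the Perron--Frobenius eigenvalue, but the probabilistic route above is shorter and more transparent.
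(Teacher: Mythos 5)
The paper does not prove this proposition; it is quoted from \cite[Proposition 3.4]{KKPW} and presented as a known fact, so there is no in-text argument to compare against. Your self-contained proof is correct and follows the standard route in the MAP literature. For smoothness: each $\psi_i$ and each $G_{i,j}$ is real-analytic on the interior of its domain of finiteness, so the characteristic polynomial $p(z,\lambda)=\det(\lambda\mathbf{I}-\boldsymbol{F}(z))$ is jointly analytic; simplicity of the Perron--Frobenius root gives $\partial_\lambda p(z,\chi(z))\neq 0$, and the analytic implicit function theorem yields analyticity of $\chi$. For convexity: H\"older's inequality with exponents $1/\theta$ and $1/(1-\theta)$ shows $z\mapsto\log\mathbf{E}_{0,i}\bigl[\mathrm{e}^{z\xi(t)}\bigr]$ is convex for each fixed $t$, and the spectral asymptotic $\mathrm{e}^{-\chi(z)t}\mathrm{e}^{\boldsymbol{F}(z)t}\to\boldsymbol{v}(z)\boldsymbol{u}(z)^{\top}$ (valid since $\chi(z)$ strictly dominates all other eigenvalues in real part and the spectral projection onto its eigenline has strictly positive entries) identifies $\chi$ as the pointwise limit of the convex functions $t^{-1}\log\mathbf{E}_{0,i}[\mathrm{e}^{z\xi(t)}]$; convexity survives the limit. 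The only detail you invoke implicitly but could flag is that the left Perron eigenvector $\boldsymbol{u}(z)$ is also strictly positive (so that $v_i(z)\sum_j u_j(z)>0$ and the logarithm is well behaved); the paper's preceding proposition only records positivity of the right eigenvector, but applying the same argument to $\boldsymbol{F}(z)^{\top}$ settles it. No genuine gap.
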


On account of the fact that $\boldsymbol{F}(0) = \boldsymbol{Q}$, it is easy to see that we always have $\chi(0)=0$. If we  assume that the equation
\begin{equation}\label{eq:perron}
\chi(z)=0 
\end{equation}
has a non-zero real root,  henceforth denoted by $\theta$ and referred to as the {\it Cram\'er number}, then the previous proposition allows us to conclude that $\chi$ is defined at least on the interval between $0$ and $\theta$.

If $\theta>0$, then $\chi'(0+)$ is well defined and convexity dictates that it must be negative. In that case $\lim_{t\to\infty}\xi(t) =-\infty$ almost surely. Moreover, if we take $\gamma = \theta$ in Proposition \ref{p:mg and com}, then, as $\chi_\theta'(0-) = \chi'(\theta-)>0$, under the associated change of measure,  $\lim_{t\to\infty}\xi(t) =\infty$ almost surely.

Conversely, if $\theta<0$, then $\chi'(0-)$ is well defined and convexity dictates that it must be positive. In that case $\lim_{t\to\infty}\xi(t) =\infty$ almost surely. Again, if we take $\gamma = \theta$ in Proposition \ref{p:mg and com}, then $\chi_\theta'(0+) = \chi'(\theta+)<0$, under the associated change of measure,  $\lim_{t\to\infty}\xi(t) =-\infty$ almost surely.
In both cases, the change of measure (\ref{MAPCOM}) using $\gamma = \theta$ exchanges the long-term drift of the underlying MAP from $\pm\infty$ to $\mp\infty$.

\subsection{Real self-similar Markov processes}\label{section-rssMp}
In \cite{CPR} the authors confine their attention to rssMp 
in `class \textbf{C.4}'. A rssMp $X$ is in \textbf{C.4} if, for
all $x \ne 0$, $\mathbb{P}_x( \exists t > 0: X_t X_{t -} < 0 ) = 1$;
that is, with probability one, the process $X$ changes
sign infinitely often.
Define
\[ \tau^{\{0\}} = \inf\{t \ge 0: X_t = 0 \},\]
the time to absorption at the origin.

Such a process may be identified with a MAP via a deformation of space and
time  which we call the
{\it Lamperti--Kiu representation} of $X$. The following
result is a simple consequence of \cite[Theorem 6]{CPR}.

\begin{proposition}
  Let $X$ be an rssMp in class \textbf{C.4} and fix $x \ne 0$.
   Then there exists a time-change $\sigma$, adapted to the filtration of $X$,
  such that, under the law $\mathbb{P}_x$, the process 
  \[ (\xi(t),J(t)) = (\log\abs{X_{\sigma(t)}}, {\rm sign}(X_{\sigma(t)})) , \qquad t \ge 0, \]
  is a MAP with state space $E = \{-1,1\}$
  under the law $\mathbf{P}_{\log |x|,{\rm sign}(x)}$.
  Furthermore, the process $X$ under $\mathbb{P}_x$ has the representation
  \[ X_t =  J( \varphi(t)){\rm e}^{ \xi( \varphi(t))
    } , \for 0 \le t < \tau^{\{0\}}, \]
  where $\varphi$ is the inverse of the time-change $\sigma$,
  and may be given by
\begin{equation}\label{e:Lamp time change}
  \varphi(t) = \inf \biggl\{ s > 0 : \int_0^s \exp(\alpha \xi(u))
  \, \dd u > t  \biggr\}, \for t < \tau^{\{0\}}.
\end{equation}
\end{proposition}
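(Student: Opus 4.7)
My plan is to imitate the classical Lamperti construction, with the sign process giving the modulating chain. Define the additive functional $T(t) = \int_0^t \abs{X_u}^{-\alpha}\,\dd u$ for $t < \tau^{\{0\}}$, and let $\sigma$ be its right-continuous inverse. Then set
\[
\xi(s) := \log\abs{X_{\sigma(s)}}, \qquad J(s) := \sgn(X_{\sigma(s)}),
\]
which is well-defined because hypothesis \textbf{C.4} forces the process to remain off the origin on the natural domain of $\sigma$. The representation formula is then a tautology: $X_t = X_{\sigma(T(t))} = J(T(t))\,\exp(\xi(T(t)))$, so all that needs verification is (a) the MAP property of $(\xi,J)$ and (b) the identification $\varphi = T$ with the expression in \eqref{e:Lamp time change}.

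For (a), fix $t\ge 0$ and apply the strong Markov property of $X$ at the stopping time $\sigma(t)$: conditionally on $X_{\sigma(t)} = y$, the process $(X_{\sigma(t)+u})_{u\ge 0}$ has law $\mathbb{P}_{y}$. Writing $y = |y|\cdot\sgn(y)$ and invoking the self-similarity with scaling constant $c = |y|$, we have
\[
(X_{\sigma(t)+u})_{u\ge 0} \stackrel{d}{=} (\abs{y}\,X'_{u\abs{y}^{-\alpha}})_{u\ge 0},
\]
where $X'$ is an independent copy of $X$ starting from $\sgn(y) \in \{-1,1\}$. Substituting $v = u\abs{y}^{-\alpha}$ in the integral defining the time-change on the shifted process gives
\[
\int_0^r \abs{X_{\sigma(t)+u}}^{-\alpha}\,\dd u = \int_0^{r\abs{y}^{-\alpha}} \abs{X'_v}^{-\alpha}\,\dd v,
\]
whence $\sigma(t+s) - \sigma(t) = \abs{y}^\alpha \sigma'(s)$ in distribution, with $\sigma'$ the analogous time-change for $X'$. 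Consequently $\xi(t+s) - \xi(t) = \log\abs{X'_{\sigma'(s)}} = \xi'(s)$ and $J(t+s) = \sgn(X'_{\sigma'(s)}) = J'(s)$ in distribution. Since the law of $(\xi',J')$ depends only on $\sgn(y) = J(t)$, this is exactly the MAP property \eqref{e:MAP}. In particular, reading off the second coordinate shows that $J$ is a continuous-time Markov chain on $\{-1,1\}$.

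For (b), compute $\int_0^s e^{\alpha \xi(u)}\,\dd u = \int_0^s \abs{X_{\sigma(u)}}^\alpha\,\dd u$, and change variables $r = \sigma(u)$ (so $\dd u = \abs{X_r}^{-\alpha}\,\dd r$ since $\sigma = T^{-1}$ satisfies $\sigma'(u) = \abs{X_{\sigma(u)}}^\alpha$). The integral collapses to $\int_0^{\sigma(s)}\dd r = \sigma(s)$. Hence $\inf\{s : \int_0^s e^{\alpha\xi(u)}\,\dd u > t\}$ is the right-continuous inverse of $\sigma$, which is $T = \varphi$, as required.

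The principal technical obstacle is justifying that $\sigma(t)$ is a genuine stopping time in the filtration of $X$ and that the strong Markov application above is clean across sign-changes; here class \textbf{C.4} is essential, since it prevents degeneration at the origin before $\tau^{\{0\}}$ and guarantees that $J$ actually jumps infinitely often (so the embedded chain is irreducible on $\{-1,1\}$). One must also verify that the additional jumps $\Delta_{i,-i}$ of $\xi$ at jump times of $J$ arise from the sign-reversing jumps of $X$ and have laws depending only on the pre-jump sign, which again follows from the strong Markov plus scaling argument used above.
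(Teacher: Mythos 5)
The paper itself does not supply a proof of this proposition: it simply states that the result is ``a simple consequence of [Theorem 6]'' of Chaumont, Panti and Rivero \cite{CPR}, and defers entirely to that reference. Your proof is a correct, self-contained reconstruction of the Lamperti--Kiu argument along exactly the lines that CPR establish. You identify $\sigma$ as the right-continuous inverse of the additive functional $T(t)=\int_0^t\abs{X_u}^{-\alpha}\,\dd u$, verify that $\varphi=T$ matches the displayed formula \eqref{e:Lamp time change} via the change of variables $r=\sigma(u)$ (using $\sigma'(u)=\abs{X_{\sigma(u)}}^\alpha$ on the domain where $X$ is off the origin), and derive the MAP property \eqref{e:MAP} by applying the strong Markov property of $X$ at the stopping time $\sigma(t)$ together with the scaling property with $c=\abs{y}$ to reduce the post-$\sigma(t)$ dynamics to an independent copy started from $\sgn(y)\in\{-1,1\}$. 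The scaling computation that turns $T(\sigma(t)+r)-T(\sigma(t))$ into $T'(r\abs{y}^{-\alpha})$ is the crux and you carry it through correctly, so the conditional law of $(\xi(t+s)-\xi(t),J(t+s))$ depends only on $J(t)$, which is precisely the MAP structure. Two small points worth making explicit rather than flagging in passing: first, $\sigma(t)$ is a stopping time because $T$ is an adapted, continuous, strictly increasing process and $\{\sigma(t)\le r\}=\{T(r)\ge t\}$; second, when $\int_0^{\tau^{\{0\}}}\abs{X_u}^{-\alpha}\,\dd u<\infty$ the pair $(\xi,J)$ is defined only on a finite random interval, corresponding to a killed MAP sent to a cemetery, which is consistent with the general framework but should be stated. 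Your remark that class \textbf{C.4} guarantees the irreducibility of the two-state chain $J$ (since the sign changes infinitely often a.s.) is also correct and is indeed the standing assumption of the paper.
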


In short, up to an endogenous time change, a rssMp has a polar decomposition in which $\exp\{\xi\}$ describes the radial distance from the origin and $J$ describes its orientation (positive or negative).

To make the connection with the previous subsection, let us understand how the existence of a Cram\'er number for the underlying MAP to a rssMp affects  path behaviour of the latter.  Revisiting the discussion at the end of the previous subsection, we see that if $\theta >0$ then $\lim_{t\to\infty}\xi(t) =-\infty$. In that case, we deduce from the strong law of large numbers for $\xi$ and the Lamperti--Kiu transform, that
\[
\tau^{\{0\}} = \int_0^\infty {\rm e}^{\alpha\xi(t)}{\rm d}t<\infty\qquad\text{ and }\qquad X_{\tau^{\{0\}}-} = 0
\]
almost surely (irrespective of the point of issue of $X$).
Said another way, the rssMp will be continuously absorbed in the origin after an almost surely finite time. In the case that there is a Cram\'er number which satisfies $\theta<0$, then, again referring to the limiting behaviour of $\xi$ and the Lamperti--Kiu transform, we have 
\[
\tau^{\{0\}} = \int_0^\infty {\rm e}^{\alpha\xi(t)}{\rm d}t=\infty
\]
almost surely (irrespective of the point of issue of $X$). Hence, the associated rssMp never touches the origin. 

{\color{black} We can also reinterpret Proposition \ref{p:mg and com} in light of the Lamperti--Kiu representation and the fact that the quantity $\varphi(t)$ in (\ref{e:Lamp time change}) is also a stopping time. We have that when $\theta>0,$ respectively $\theta<0$,
\begin{equation}
M(\theta, \varphi(t)) = \frac{v_{J(\varphi(t))}(\theta)}{v_{{\rm sign}(x)}(\theta)}{\rm e}^{\theta(\xi(\varphi(t)) - \log |x|)} \mathbf{1}_{(\varphi(t)<\infty)}= \frac{v_{{\rm sign}(X_t)}(\theta)}{v_{{\rm sign}(x)}(\theta)}\frac{|X_t|^\theta}{|x|^\theta}
\mathbf{1}_{(t<\tau^{\{0\}})}, \qquad t\geq 0.\label{itisamartingale}
\end{equation}
is a $\mathbb{P}_x$-martingale, respectively, a  $\mathbb{P}_x$-supermartingale.}


\section{Main results}

Throughout the remainder of the paper we make following assumption.

\bigskip
\begin{itemize}
\item[{\bf (A):}] The process $X$ is a rssMp whose underlying MAP does not have lattice support and has a leading eigenvalue $\chi$ with Cram\'er number $\theta\neq 0$ such that $\chi'(\theta)$ exists in $\mathbb{R}$.
\end{itemize}

\bigskip

\noindent Under this assumption, our objective is to construct conditioned versions of $X$. When $\theta>0$, through a limiting procedure,  we will build the process $X$ conditioned to avoid the origin. Similarly when $\theta<0$, we will build the process $X$ conditioned to continuously absorb at the origin. Accordingly, in both cases, we shall show the existence of a harmonic function for the process $X$ which is used to make a Doob $h$-transform in the representation of the conditioned processes. 

\begin{theorem}\label{one}
Suppose that $X$ is a rssMp under assumption (A) and $\mathcal{F}_t: = \sigma(X_s: s\leq t)$, $t\geq 0$ is its natural filtration. Define
\[
h_\theta(x): = v_{{\rm sign}(x)}(\theta)|x|^\theta, \qquad x\in\mathbb{R},
\]
and, for Borel set $D$, let $  \tau^{D}:=\inf\{s\geq 0:X_{s}\in D\}$.
\begin{enumerate}
\item[(a)] If $\theta>0$, then 
\begin{equation}
\mathbb{P}^{\circ}_x(A) 
:= \mathbb{E}_x\left[\frac{h_\theta(X_t)}{h_\theta(x)}\mathbf{1}_{(A, \, t<\tau^{\{0\}})}\right],
\label{exptoh}
\end{equation}
for  $t>0$, $x\neq 0$ and $A\in\mathcal{F}_t$,
defines a probability measure on the canonical space of $X$ such that $(X, \mathbb{P}^\circ_x)$, $x\in\mathbb{R}\backslash\{0\}$, is a rssMp. Moreover, for all $A\in\mathcal{F}_{t}$, 
\begin{equation}
\mathbb{P}^{\circ}_x(A) =\lim_{a\to \infty}\mathbb{P}_x( A{\color{black}\cap\{t<\tau^{(-a,a)^{\rm c}}\}}\,|\,\tau^{(-a,a)^{\texttt c}}<\tau^{\{0\}}). 
\label{limitavoid}
\end{equation}

\item[(b)] If $\theta<0$, then, 
\[
\mathbb{P}^{\circ}_x(A, \, t<\tau^{\{0\}}):= \mathbb{E}_x\left[\frac{h_\theta(X_t)}{h_\theta(x)}\mathbf{1}_{(A, \, t<\tau^{\{0\}})}\right],
\]
 for all $t>0$, $x\neq 0$ and $A\in\mathcal{F}_t$, defines a probability measure on the canonical space of $X$  with cemetery state at $0$ such that $(X, \mathbb{P}^\circ_x)$, $x\in\mathbb{R}\backslash\{0\}$, is a rssMp. Moreover,  for all $t>0$ and $A\in\mathcal{F}_{t}$
\begin{equation}
 \mathbb{P}^{\circ}_x(A,\, t< \tau^{\{0\}})=\lim_{a\to 0}\mathbb{P}_x(A\,|\,\tau^{(-a,a)}<\infty). 
\label{limitabsorb}
\end{equation}
\end{enumerate}

\end{theorem}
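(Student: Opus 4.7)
The proof naturally splits into two tasks: (i) verifying that the prescription (\ref{exptoh}) (and its analogue in (b)) defines a probability measure under which $X$ is a rssMp, and (ii) establishing the limit identities (\ref{limitavoid}) and (\ref{limitabsorb}). Task (i) is almost a direct consequence of the identification already made in (\ref{itisamartingale}). Indeed, since $\varphi(t)$ is a $(\mathcal{G}_s)$-stopping time, a standard optional-stopping/projection argument recasts the MAP martingale $M(\theta,\cdot)$ as the $(\mathcal{F}_t)$-adapted process $h_\theta(X_t)/h_\theta(x)\mathbf{1}_{(t<\tau^{\{0\}})}$, which is a unit-mean $\mathbb{P}_x$-martingale when $\theta>0$ and a $\mathbb{P}_x$-supermartingale when $\theta<0$. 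Kolmogorov consistency then yields a probability measure $\mathbb{P}^{\circ}_x$ on canonical path space in case (a), and a sub-Markovian semigroup with continuous absorption at $0$ in case (b). The Markov property under $\mathbb{P}^\circ_x$ follows from general Doob $h$-transform theory; the scaling property follows from the homogeneity $h_\theta(cy)=c^\theta h_\theta(y)$ combined with the scaling property under $\mathbb{P}_x$, completing (i).

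For the limit identities, I would reduce both statements to a single asymptotic estimate on the relevant first-passage probability. In case (a), the strong Markov property applied at time $t$ gives
\[
\mathbb{P}_x(A\cap\{t<\tau^{(-a,a)^{\rm c}}\},\,\tau^{(-a,a)^{\rm c}}<\tau^{\{0\}})=\mathbb{E}_x\!\left[\mathbf{1}_{A,\,t<\tau^{(-a,a)^{\rm c}}}\,\mathbb{P}_{X_t}\!\left(\tau^{(-a,a)^{\rm c}}<\tau^{\{0\}}\right)\right],
\]
so (\ref{limitavoid}) reduces to proving that
\[
\lim_{a\to\infty}\frac{\mathbb{P}_y(\tau^{(-a,a)^{\rm c}}<\tau^{\{0\}})}{\mathbb{P}_x(\tau^{(-a,a)^{\rm c}}<\tau^{\{0\}})}=\frac{h_\theta(y)}{h_\theta(x)}
\]
locally uniformly in $y$ on $\mathbb{R}\setminus\{0\}$, and then invoking bounded convergence (with the ratio bounded by the martingale property itself). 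Case (b) is parallel, with $\tau^{(-a,a)^{\rm c}}$ replaced by $\tau^{(-a,a)}$ and the limit $a\to\infty$ replaced by $a\downarrow 0$; the strong Markov reduction is identical once one checks that the supermartingale mass carried at the cemetery does not contribute to the conditional event $\{\tau^{(-a,a)}<\infty\}\cap A$ for $A\in\mathcal{F}_t$.

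The asymptotic in the preceding display is the real work. Translating through the Lamperti--Kiu representation, $\tau^{(-a,a)^{\rm c}}$ corresponds to the first passage time $\tau^+_b:=\inf\{s:\xi(s)>b\}$ of the underlying MAP, with $b=\log a$. Optional stopping applied to $M(\theta,\cdot\wedge\tau^+_b)$ (uniform integrability comes for free from $\theta>0$, $\xi\to-\infty$ under $\mathbf{P}$, and the fact that the martingale is bounded on $[0,\tau^+_b]$ up to the terminal overshoot) yields
\[
h_\theta(y)=a^\theta\,\mathbf{E}_{\log|y|,\,{\rm sign}(y)}\!\left[v_{J(\tau^+_b)}(\theta)\,e^{\theta(\xi(\tau^+_b)-b)}\mathbf{1}_{(\tau^+_b<\infty)}\right].
\]
Rewriting the right-hand side using the Esscher transform of Proposition \ref{p:mg and com} with $\gamma=\theta$ (under which $\xi\to+\infty$ a.s. and $\tau^+_b<\infty$ a.s.) gives
\[
\mathbb{P}_y(\tau^{(-a,a)^{\rm c}}<\tau^{\{0\}})=a^{-\theta}h_\theta(y)\,\mathbf{E}^\theta_{\log|y|,\,{\rm sign}(y)}\!\left[e^{-\theta(\xi(\tau^+_b)-b)}\right],
\]
and by the MAP renewal theorem (non-lattice hypothesis in (A); finiteness of $\chi'(\theta)$ ensures the ascending ladder height has finite mean and hence a nondegenerate stationary overshoot), the last expectation converges to a finite positive constant $C$ independent of $y$ as $b\to\infty$. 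The ratio limit follows. Case (b) is proved by the mirror-image argument using downward first passage $\tau^-_b:=\inf\{s:\xi(s)<b\}$ with $b=\log a\to-\infty$ and $\gamma=\theta<0$.

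The main obstacle is the renewal step and the attendant uniformity in $y$: one needs the MAP renewal theorem to apply with a rate of convergence (or at least pointwise convergence together with a $y$-uniform bound) sharp enough to permit the bounded-convergence argument that transfers the ratio limit inside the $\mathbb{E}_x[\mathbf{1}_A\,\cdot]$. This is where assumption (A) — non-lattice support together with $\chi'(\theta)\in\mathbb{R}$ — does all the heavy lifting, and where (\ref{itisamartingale}) is essential for producing the uniform dominating function needed to conclude.
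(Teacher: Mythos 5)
Your overall architecture matches the paper's: reduce the limit identities to an asymptotic ratio
\[
\lim_{a}\frac{\mathbb{P}_y(\tau^{(-a,a)^{\rm c}}<\tau^{\{0\}})}{\mathbb{P}_x(\tau^{(-a,a)^{\rm c}}<\tau^{\{0\}})}=\frac{h_\theta(y)}{h_\theta(x)}
\]
via the Markov property, establish that ratio by translating to the Lamperti--Kiu MAP and invoking Markov additive renewal theory (this is precisely the content of the paper's Proposition~\ref{regvar} and Lemma~\ref{cramerestimate}), and observe that the change of measure is a Doob $h$-transform preserving both the Markov and scaling structure. Your use of the Esscher transform and the overshoot renewal theorem is in the same spirit as the paper's, though the paper carries this out more carefully by passing through the ascending ladder MAP, splitting first passage into creeping and jumping, and verifying direct Riemann integrability.

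The genuine gap is in how you pass the limit $a\to\infty$ (resp.\ $a\to 0$) through $\mathbb{E}_x[\mathbf{1}_A\,\cdot\,]$. You invoke ``bounded convergence (with the ratio bounded by the martingale property itself),'' but the martingale identity $\mathbb{E}_x[h_\theta(X_t)/h_\theta(x);t<\tau^{\{0\}}]=1$ supplies integrability of the \emph{limit}, not a dominating function that is uniform in $a$. In fact, for fixed $a$ the ratio $\mathbb{P}_{X_t}(\tau^{(-a,a)^{\rm c}}<\tau^{\{0\}})/\mathbb{P}_x(\tau^{(-a,a)^{\rm c}}<\tau^{\{0\}})$ can be large when $|X_t|$ is close to $a$, so there is no obvious uniform bound. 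The paper's case (a) circumvents this by a Chaumont-style two-sided argument: apply Fatou to obtain a $\liminf$ lower bound for both $A$ and $A^c$, then use the exact martingale identity to convert the lower bound on $A^c$ into an upper bound on the $\limsup$ for $A$. This works only because the density is a true martingale (total mass exactly $1$).

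Your treatment of case (b) as a ``mirror-image argument'' papers over the deeper difficulty there. When $\theta<0$ the process in (\ref{itisamartingale}) is a strict supermartingale, so the two-sided Fatou argument from (a) does not close: the ``$\leq 1$'' inequality is not sharp and does not recover the correct upper bound. The paper must do real extra work, decomposing the event $\{t<\tau^{(-a,a)}\}$ according to whether $a/|X_t|$ is smaller or larger than a threshold $\epsilon$, exploiting the \emph{uniformity} in the Cram\'er estimate from Proposition~\ref{regvar}(b) on the small region, and showing the contribution of the large region vanishes via monotone convergence under $\mathbb{P}^\circ_x$. Your proposal does not address how to recover the $\limsup$ bound without a uniform dominating function in the supermartingale case, so (b) is left with a missing step that cannot be dismissed as a mirror image of (a).
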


In case (a) of the above theorem, as $\theta >0$, the Doob $h$-transform rewards paths that drift far from the origin. Indeed the limiting procedure (\ref{limitavoid}) conditions the paths of the rssMp to explore further and further distances from the origin before being absorbed at the origin. In this sense, we refer to the process described in part (a) as the rssMp {\it conditioned to avoid the origin}. In case (b) of the theorem above, the Doob $h$-transform rewards paths that stay close to the origin. Moreover, the limiting procedure (\ref{limitabsorb}) conditions  the paths of the rssMp to ultimately visit smaller and smaller balls centred around the origin. We therefore refer to the process described in part (b) as the rssMp {\it conditioned to absorb continuously at the origin}.

The above theorem constructs the conditioned processes via limiting spatial requirements. For the case of conditioning to avoid the origin, we can give a second sense in which the Doob $h$-transform emerges as the result of a conditioning procedure. The latter is done by conditioning the first visit to the origin to occur later and later in time. 

\begin{theorem}\label{two}
Suppose that $X$ is a rssMp under assumption (A) and $\theta>0$. Then for  $x\in\mathbb{R}\backslash\{0\}$ $t>0$, and $A\in\mathcal{F}_{t}$, we have 
\begin{equation}
\mathbb{P}^{\circ}_x(A ) =\lim_{s\to \infty}\mathbb{P}(A \,|  \tau^{\{0\} } >t+s),
\label{limitavoid2}
\end{equation}
where $\mathbb{P}^\circ_x$, $x\in\mathbb{R}\backslash\{0\}$, is given by \eqref{exptoh}.
\end{theorem}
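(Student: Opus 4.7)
The plan is to apply the Markov property at time $t$, identify a pointwise limit via tail asymptotics of $\tau^{\{0\}}$, and upgrade this to convergence of expectations using Scheffé's lemma.

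First, for $A \in \mathcal{F}_t$, the Markov property at $t$ gives
\[
\mathbb{P}_x(A \mid \tau^{\{0\}} > t+s)
 = \mathbb{E}_x\!\left[\mathbf{1}_A \mathbf{1}_{t<\tau^{\{0\}}} \, \frac{\mathbb{P}_{X_t}(\tau^{\{0\}} > s)}{\mathbb{P}_x(\tau^{\{0\}} > t+s)}\right].
\]
So everything reduces to understanding the ratio inside the expectation as $s\to\infty$.

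The key input is the asymptotic
\[
\mathbb{P}_y(\tau^{\{0\}} > t) \;\sim\; C\, h_\theta(y)\, t^{-\theta/\alpha}, \qquad t \to \infty,
\]
for some constant $C \in (0,\infty)$, where $y \in \mathbb{R}\setminus\{0\}$. To obtain this, I would invoke the Lamperti--Kiu identity $\tau^{\{0\}} = \int_0^\infty \exp(\alpha \xi(u))\,\mathrm{d}u$ and apply a MAP-analog of Rivero's Cramér-type tail theorem for exponential functionals: under (A), the Cramér number $\theta > 0$ together with finite $\chi'(\theta)$ and the non-lattice hypothesis yield, for each starting sign $i\in\{-1,1\}$, that $\mathbf{P}_{0,i}\bigl(\int_0^\infty e^{\alpha\xi(u)}\mathrm{d}u > t\bigr) \sim C\, v_i(\theta)\, t^{-\theta/\alpha}$. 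The scaling property of $X$ then converts the dependence on the starting modulus $|y|$ into the factor $|y|^\theta$, producing the function $h_\theta(y) = v_{\mathrm{sign}(y)}(\theta)|y|^\theta$ in the constant. Since $(t+s)/s \to 1$ and $(-\theta/\alpha)$-regular variation is preserved under this shift, we deduce the pointwise convergence
\[
\frac{\mathbb{P}_{X_t}(\tau^{\{0\}} > s)}{\mathbb{P}_x(\tau^{\{0\}} > t+s)} \;\xrightarrow[s\to\infty]{}\; \frac{h_\theta(X_t)}{h_\theta(x)}
\]
on the event $\{t<\tau^{\{0\}}\}$.

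To turn this pointwise convergence into the desired convergence of expectations, set $M_s := \bigl(\mathbb{P}_{X_t}(\tau^{\{0\}}>s)/\mathbb{P}_x(\tau^{\{0\}}>t+s)\bigr)\mathbf{1}_{t<\tau^{\{0\}}}$ and $M_\infty := \bigl(h_\theta(X_t)/h_\theta(x)\bigr)\mathbf{1}_{t<\tau^{\{0\}}}$. By the tower property, $\mathbb{E}_x[M_s] = 1$ for every $s$, and by Theorem \ref{one}(a) we also have $\mathbb{E}_x[M_\infty] = 1$. Scheffé's lemma applied to the non-negative sequence $M_s \to M_\infty$ with matching unit expectations yields $L^1(\mathbb{P}_x)$ convergence, so for any $A \in \mathcal{F}_t$,
\[
\lim_{s\to\infty}\mathbb{P}_x(A \mid \tau^{\{0\}} > t+s) = \mathbb{E}_x[\mathbf{1}_A M_\infty] = \mathbb{P}^\circ_x(A),
\]
which is exactly \eqref{limitavoid2}.

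The main obstacle is the tail estimate for $\tau^{\{0\}}$: while the Markov decomposition and Scheffé's lemma are essentially formal, establishing the Cramér-type asymptotic $\mathbb{P}_y(\tau^{\{0\}} > t) \sim C h_\theta(y) t^{-\theta/\alpha}$ requires a non-trivial extension of the Lévy-process results of Rivero to the MAP setting (using the Esscher transform in Proposition \ref{p:mg and com} at $\gamma = \theta$ to convert the exponential functional under the Cramér condition into an exponential functional of a MAP drifting to $+\infty$, then exploiting renewal theory for the ladder structure of the transformed MAP). Once that tail is in hand, everything else is routine.
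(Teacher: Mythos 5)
Your proposal is correct and follows essentially the same route as the paper: the Markov property reduces the claim to passing a limit through the expectation in \eqref{conditionedtime}, and the key input is the tail asymptotic $\mathbb{P}_y(\tau^{\{0\}}>t)\sim C\,h_\theta(y)\,t^{-\theta/\alpha}$, which the paper establishes as Theorem \ref{tailtheorem} via Markov additive renewal theory precisely along the lines you sketch. The only cosmetic difference is that you package the upgrade from pointwise to $L^1$ convergence as Scheff\'e's lemma, whereas the paper reuses the Fatou-plus-complement sandwich from the proof of Theorem \ref{one}(a); these are the same argument in different clothing, both resting on $\mathbb{E}_x[M_s]=\mathbb{E}_x[M_\infty]=1$ via the Markov property and the martingale \eqref{itisamartingale}.
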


\section{Remarks on the case of stable processes}

The central family of examples which fits the settting of the two main theorems above is that of a (strictly) stable process with index $\alpha\in(0,2)$. Recall that the latter processes are those rssMp which do not have continuous paths and which are also in the class of L\'evy processes. As a L\'evy process,  a stable process has characteristic exponent $\Psi(\theta): = -t^{-1}\log\mathbb{E}_0[{\rm e}^{{\rm i}\theta X_t}]$, $\theta\in\mathbb{R}$, $t>0$, given by 
\[
\Psi(\theta) = |\theta|^\alpha ({\rm e}^{\pi\iu\alpha(\frac{1}{2} -\rho)} \mathbf{1}_{(\theta>0)} + {\rm e}^{-\pi\iu\alpha (\frac{1}{2} - \rho)}\mathbf{1}_{(\theta<0)}), \qquad \theta\in\mathbb{R},
\]
where $\rho : = \mathbb{P}_0(X_1>0)$. For convenience, we assume throughout this section that $\alpha\rho\in(0,1)$, which is to say that the stable process has path with discontinuities of both signs.

 For such processes, the matrix exponent of the underlying MAP in the Lamperti--Kiu representation has been computed in \cite{KKPW}, with the help of computations in \cite{CPR}, and takes the form 
\begin{equation}
  \boldsymbol{F}(z) =\left[
  \begin{array}{cc}
    - \dfrac{\Gamma(\alpha-z)\Gamma(1+z)}
      {\Gamma(\alpha\hat\rho-z)\Gamma(1-\alpha\hat\rho+ z)}
    & \dfrac{\Gamma(\alpha-z)\Gamma(1+z)}
      {\Gamma(\alpha\hat\rho)\Gamma(1-\alpha\hat\rho)}
    \\
    &\\
    \dfrac{\Gamma(\alpha-z)\Gamma(1+ z)}
      {\Gamma(\alpha\rho)\Gamma(1-\alpha\rho)}
    & - \dfrac{\Gamma(\alpha-z)\Gamma(1+z)}
      {\Gamma(\alpha\rho-z)\Gamma(1-\alpha\rho+z)}
  \end{array} 
  \right],
  \label{MAPHG}
\end{equation}
for $\Re(z)\in(-1,\alpha)$,  where $\hat\rho = 1-\rho$.

A straightforward computation shows that, for $\Re(z)\in (-1,\alpha)$,
\[
{\rm det}\boldsymbol{F}(z)  =\frac{\Gamma(\alpha-z)^2\Gamma(1+z)^2}{\pi^2}\left\{
  \sin(\pi(\alpha\rho- z))\sin(\pi(\alpha\hat\rho- z)) 
  - \sin(\pi \alpha \rho) \sin(\pi\alpha\hat\rho)\right\},
\]
which has a root at $z = \alpha-1$. In turn, this implies that {\color{black}$\chi(\alpha-1)=0$}. One also easily checks with the help of the reflection formula for gamma functions that 
\[
\boldsymbol{v}({\alpha -1}) \propto\left[
\begin{array}{c}
\sin(\pi\alpha\hat\rho)\\
\sin(\pi\alpha\rho)
\end{array} \right].
\]
In that case, we see that Theorems \ref{one} and \ref{two} justify the claim that {\color{black}the family of measures $(\mathbb{P}^\circ_x, x\in\mathbb{R})$ defined via the relation}
\[
\left.\frac{{\rm d}\mathbb{P}^\circ_x}{{\rm d}\mathbb{P}_x}\right|_{\mathcal{F}_t}: = \frac{\sin(\pi\alpha\hat\rho)\mathbf{1}_{(X_t>0)} + \sin(\pi\alpha\rho)\mathbf{1}_{(X_t<0)}}{\sin(\pi\alpha\hat\rho)\mathbf{1}_{(x>0)} + \sin(\pi\alpha\rho)\mathbf{1}_{x <0)}}\left|\frac{X_t}{x}\right|^{\alpha-1}\mathbf{1}_{(t<\tau^{\{0\}})},\qquad t\geq 0,
\]
is the the Doob $h$-transform corresponding to the stable process conditioned to avoid the origin when $\alpha\in(1,2)$, and the stable process conditioned to be continuously absorbed at the origin when $\alpha\in(0,1)$. The former of these two conditionings has already been observed in \cite{CPR}, the latter is a new observation. Note that, when $\alpha=1$, the Doob $h$-transform corresponds to no change of measure at all, as the density is equal to unity and $\tau^{\{0\}} = \infty$ almost surely under $\mathbb{P}_x$, $x\in\mathbb{R}$.

\bigskip

One may prove Theorems \ref{one} and \ref{two} for stable processes 
by appealing to a direct form of reasoning using Bayes formula,  scaling, dominated convergence using the fact that $\mathbb{E}_x[|X_t|^{\alpha-\varepsilon}]<\infty$, $x\in\mathbb{R}, t>0, 0<\varepsilon<\alpha$, and the representation of the probabilities:
\[
\mathbb{P}_x(\tau^{(-1,1)^{{\rm c}}}<\tau^{\{0\}}) = (\alpha-1) x^{\alpha-1}\int_1^{1/x} (t-1)^{\alpha\rho-1}(t+1)^{\alpha\rhohat-1}{\rm d}t, \qquad x\in (0,1)
\]
for $\alpha\in(1,2)$, and 
\[
\mathbb{P}_x(\tau^{(-1,1)}<\infty) = \frac{\Gamma(1-\alpha\rho)}{\Gamma(\alpha\rhohat)\Gamma(1-\alpha)}
  \int^1_{\frac{x-1}{x+1}} t^{\alpha\rhohat - 1} (1-t)^{-\alpha} \, \dd t ,\qquad x>1
\]
for $\alpha\in(0,1)$.
The first of these probabilities is taken from Corollary 1 of \cite{PS} and the second from Corollary 1.2 of \cite{KPW}. 

For the general case, no such detailed formulae are available and a different approach is needed.  The main point of interest is in understanding the asymptotic probabilities  of the conditioning event in Theorems \ref{one} and \ref{two} by appealing to a Cram\'er-type result for the decay of the probabilities $\mathbb{P}_x(\tau^{(-a,a)}<\infty)$ and $\mathbb{P}_x(\tau^{(-a,a)^{{\rm c}}}<\infty)$ as $a\to\infty.$

\bigskip

An additional point of interest  in the case of stable processes pertains to the setting of the   so-called Riesz--Bogdan--Zak transform; see \cite{BZ} and \cite{K}. The understanding of $\mathbb{P}^\circ_x$, $x\in\mathbb{R}\backslash\{0\}$, gives context to the transformation. 

\begin{theorem}[Riesz--Bogdan--Zak transform]
\label{th:BZ} Suppose that $X$ is a stable process with $\alpha\in(0,2)$ satisfying $\alpha\rho\in(0,1)$.
Define
\[
\eta(t) = \inf\{s>0 : \int_0^s |X_u|^{-2\alpha}{\rm d}u >t\}, \qquad t\geq 0.
\]
Then, for all $x\in\mathbb{R}\backslash\{0\}$, $(-1/{X}_{\eta(t)})_{t\geq 0}$ under $\mathbb{P}_{x}$ is equal in law to $(X, \mathbb{P}_{-1/x}^\circ)$. Moreover, the process $(X, \mathbb{P}^\circ_x)$, $x\in\mathbb{R}\backslash\{0\}$ is a self-similar Markov process with underlying MAP via the Lamperti-Kiu whose Matrix exponent satisfies, for $\Re(z)\in(-\alpha,1)$,
\begin{equation}
\boldsymbol{F}^\circ(z) =
 \left[
  \begin{array}{cc}
    - \dfrac{\Gamma(1-z)\Gamma(\alpha+z)}
      {\Gamma(1-\alpha\rho-z)\Gamma(\alpha\rho+ z)}
    & \dfrac{\Gamma(1-z)\Gamma(\alpha+z)}
      {\Gamma(\alpha\rho)\Gamma(1-\alpha\rho)}
    \\
    &\\
    \dfrac{\Gamma(1-z)\Gamma(\alpha+ z)}
      {\Gamma(\alpha\hat\rho)\Gamma(1-\alpha\hat\rho)}
    & - \dfrac{\Gamma(1-z)\Gamma(\alpha+z)}
      {\Gamma(1-\alpha\hat\rho-z)\Gamma(\alpha\hat\rho+z)}
  \end{array} 
  \right].
  \label{Fcirc}
\end{equation}

\end{theorem}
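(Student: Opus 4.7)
The plan has two parts: first, derive the matrix exponent \eqref{Fcirc} from Proposition \ref{p:mg and com}; second, identify $(-1/X_{\eta(t)})_{t\geq 0}$ in law with $(X, \mathbb{P}^\circ_{-1/x})$ by comparing Lamperti--Kiu MAPs. Throughout I use that the excerpt has already identified the Cram\'er number $\theta = \alpha - 1$ with $\chi(\alpha-1)=0$ and right eigenvector $\boldsymbol v(\alpha-1) \propto (\sin(\pi\alpha\hat\rho), \sin(\pi\alpha\rho))^T$. For the matrix exponent, the Doob transform by $h_\theta(x) = v_{\mathrm{sign}(x)}(\theta)|x|^\theta$ acts on the MAP level as the Esscher transform \eqref{Esscher} with $\gamma = \theta$, and since $\chi(\theta)=0$, Proposition \ref{p:mg and com} yields
\[
\boldsymbol F^\circ(z) = \boldsymbol \Delta_{\boldsymbol v}(\alpha-1)^{-1}\,\boldsymbol F(z + \alpha - 1)\,\boldsymbol \Delta_{\boldsymbol v}(\alpha-1)
\]
on the translated strip $\Re(z) \in (-\alpha,1)$. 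Substitution into \eqref{MAPHG} then produces \eqref{Fcirc}: the diagonal entries come from a direct shift of $z$ to $z+\alpha-1$, while in the off-diagonal entries the ratio $v_j(\alpha-1)/v_i(\alpha-1)$ combined with the reflection identity $\Gamma(u)\Gamma(1-u) = \pi/\sin(\pi u)$ interchanges $\rho$ and $\hat\rho$ in the gamma-denominators.

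For the Riesz--Bogdan--Zak identity, I work with the Lamperti--Kiu representation $X_u = J(\varphi(u))\mathrm e^{\xi(\varphi(u))}$ on $[0,\tau^{\{0\}})$ under $\mathbb{P}_x$, so $|X_u|^{-2\alpha} = \mathrm e^{-2\alpha\xi(\varphi(u))}$. The change of variables $u = \sigma(r)$ with $\sigma'(r) = \mathrm e^{\alpha\xi(r)}$ converts the clock inside the definition of $\eta$ into
\[
\int_0^{\sigma(r)}|X_u|^{-2\alpha}\,du \;=\; \int_0^r \mathrm e^{-\alpha\xi(v)}\,dv,
\]
so that $\varphi \circ \eta$ inverts $r \mapsto \int_0^r \mathrm e^{-\alpha \xi(v)}\,dv$. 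Consequently
\[
-\frac{1}{X_{\eta(t)}} \;=\; -J(\varphi\circ\eta(t))\,\exp\bigl(-\xi(\varphi\circ\eta(t))\bigr)
\]
is the Lamperti--Kiu reconstruction of a rssMp whose underlying MAP is $(-\xi, -J)$, started from $(-\log|x|, -\mathrm{sign}(x)) = (\log|{-1/x}|, \mathrm{sign}(-1/x))$. Relabelling the states of $-J$ so that the new state $\tilde j$ corresponds to the old state $-j$, the matrix exponent of $(-\xi, -J)$ becomes $P\,\boldsymbol F(-z)\,P$, where $P$ is the $2\times 2$ permutation matrix. Substituting $-z$ into \eqref{MAPHG} and applying the reflection formula once more shows that $P\boldsymbol F(-z)P = \boldsymbol F^\circ(z)$ on $\Re(z)\in(-\alpha,1)$. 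Since a MAP is determined in law by its matrix exponent and initial state, $(-1/X_{\eta(t)})_{t\ge 0} \eqd (X, \mathbb P^\circ_{-1/x})$.

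The main obstacle is the two algebraic checks that produce \eqref{Fcirc}: one via the Esscher conjugation of $\boldsymbol F(z+\alpha-1)$ by $\boldsymbol \Delta_{\boldsymbol v}(\alpha-1)$, the other via the sign-flip identity $P\boldsymbol F(-z)P$. Both reduce to routine gamma-function manipulations via $\Gamma(u)\Gamma(1-u)=\pi/\sin(\pi u)$, but a little care is needed to track which of $\rho$ and $\hat\rho$ appears where. A minor technical point is confirming that $\eta$ is almost surely a bijection of $[0,\infty)$, which amounts to $\int_0^\infty |X_u|^{-2\alpha}\,du = \infty$ a.s.\ under $\mathbb P_x$ and follows from the path behaviour of the stable process in the regime $\alpha\rho\in(0,1)$.
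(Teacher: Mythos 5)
The paper itself does not contain a proof of Theorem~\ref{th:BZ}: it is stated in Section~3 with citations to \cite{BZ} and \cite{K}, and is presented as a reinterpretation of those results in the language of the MAP matrix exponent \eqref{MAPHG}. Your proposal is therefore a self-contained reconstruction rather than a reproduction of an argument in the paper, and its two halves are both essentially correct. The Esscher-conjugation calculation $\boldsymbol F^\circ(z) = \boldsymbol\Delta_{\boldsymbol v}(\alpha-1)^{-1}\boldsymbol F(z+\alpha-1)\boldsymbol\Delta_{\boldsymbol v}(\alpha-1)$, using $\chi(\alpha-1)=0$, $\boldsymbol v(\alpha-1)\propto(\sin\pi\alpha\hat\rho,\sin\pi\alpha\rho)$ and the reflection formula on the off-diagonal entries, does produce \eqref{Fcirc} exactly, with the diagonal entries matching after the shift alone. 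The time-change computation $\int_0^{\sigma(r)}|X_u|^{-2\alpha}\,\dd u = \int_0^r \mathrm e^{-\alpha\xi(v)}\,\dd v$ is also right, and it correctly exhibits $-1/X_{\eta(\cdot)}$ as the Lamperti--Kiu reconstruction of the MAP $(-\xi,-J)$; checking $P\boldsymbol F(-z)P = \boldsymbol F^\circ(z)$ is then a direct entry-by-entry match that in fact needs no further use of the reflection formula. The pleasant structural fact underlying all of this is that the Esscher conjugation of $\boldsymbol F$ by $\boldsymbol\Delta_{\boldsymbol v}(\theta)$ equals the sign-and-label flip $P\boldsymbol F(-\cdot)P$ precisely because $\theta=\alpha-1$ is the Cram\'er number, and that coincidence is what realises the $h$-transform as a deterministic space-time inversion.

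One point needs correcting. You claim that $\eta$ is almost surely a bijection of $[0,\infty)$ because $\int_0^\infty|X_u|^{-2\alpha}\,\dd u = \infty$ a.s.\ ``in the regime $\alpha\rho\in(0,1)$''. This is only true when $\alpha\in(1,2)$, where $\theta>0$ forces $\xi\to-\infty$ and hence the radial clock $\int_0^\infty\mathrm e^{-\alpha\xi(v)}\,\dd v$ diverges. When $\alpha\in(0,1)$ we have $\theta<0$, $\xi\to+\infty$, and $\int_0^\infty\mathrm e^{-\alpha\xi(v)}\,\dd v<\infty$ a.s.; so $\eta$ maps a finite interval onto $[0,\infty)$ and $-1/X_{\eta(t)}$ is continuously absorbed at the origin in finite time. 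This is not a defect of the identity --- it is exactly consistent with $(X,\mathbb{P}^\circ_x)$ being the stable process conditioned to absorb continuously at $0$ when $\alpha<1$, whose lifetime is a.s.\ finite --- but the assertion about bijectivity of $\eta$ on $[0,\infty)$ should be restricted to $\alpha>1$, with the $\alpha<1$ case phrased as an equality of (possibly killed) processes up to their respective lifetimes.
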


\section{Cram\'er-type results for MAPs and the proof of Theorem \ref{one}}\label{Cramer}

Appealing to the Lamperti-Kiu process, we note that, for $|x|<a$ 
\[
\mathbb{P}_x(\tau^{(-a,a)^{{\rm c}}}<\tau^{\{0\}}) = \mathbf{P}_{\log|x|, {\rm sign}(x)}(T^+_{\log a}<\infty) = 
\mathbf{P}_{0, {\rm sign}(x)}(T^+_{\log (a/|x|)}<\infty)
\]
where $T^+_{y} = \inf\{t>0 : \xi(t)>y\}$. A similar result may be written for $\mathbb{P}_x(\tau^{(-a,a)}<\infty)$, albeit using $T^-_{y} := \inf\{t>0 : \xi(t)<y\}$. 
This suggests that the asymptotic behaviour of the two probabilities of interest can be studied through the behaviour of the underlying MAP. In fact, it turns out that, in both cases, a Cram\'er-type result in the MAP context provides the desired asymptotics.

\begin{proposition}\label{regvar}Suppose that $X$ is a rssMp under assumption (A).
\begin{itemize}
\item[(a)] When  $\theta>0$, there exists a constant $C_\theta\in(0,\infty)$ such that, for $|y|>0$ 
\[
\lim_{a\to\infty}{a}^{\theta}\mathbb{P}_y(\tau^{(-a,a)^{{\rm c}}}<\tau^{\{0\}}) =v_{{\rm sign}(y)}(\theta)C_\theta |y|^\theta.
\]
In particular,  
\begin{equation}
\lim_{a\to\infty}\frac{\mathbb{P}_y(\tau^{(-a,a)^{{\rm c}}}<\tau^{\{0\}})}{\mathbb{P}_x(\tau^{(-a,a)^{{\rm c}}}<\tau^{\{0\}})} = \lim_{a\to\infty}\frac{ \mathbf{P}_{0, {\rm sign}(y)}(T^+_{\log (a/|y|)}<\infty)}{ \mathbf{P}_{0, {\rm sign}(x)}(T^+_{\log (a/|x|)}<\infty)} = \frac{v_{{\rm sign}(y)}(\theta)}{v_{{\rm sign}(x)}(\theta)}\left|\frac{y}{x}\right|^\theta, \qquad x,y\in\mathbb{R}.
\label{SV1}
\end{equation}
\item[(b)] When $\theta<0$ , there exists a constant $\tilde{C}_\theta\in(0,\infty)$ such that, for $|y|>0$ 
\[
\lim_{a\to0}{a}^{\theta}\mathbb{P}_y(\tau^{(-a,a)}<\infty) =v_{{\rm sign}(y)}(\theta)\tilde{C}_\theta |y|^\theta.
\]
In particular,  \begin{equation}
\lim_{a\to0}\frac{\mathbb{P}_y(\tau^{(-a,a)}<\infty)}{\mathbb{P}_x(\tau^{(-a,a)}<\infty)} = \lim_{a\to\infty}\frac{ \mathbf{P}_{0, {\rm sign}(x)}(T^-_{\log (a/|y|)}<\infty)}{ \mathbf{P}_{0, {\rm sign}(x)}(T^-_{\log (a/|x|)}<\infty)} = \frac{v_{{\rm sign}(y)}(\theta)}{v_{{\rm sign}(x)}(\theta)}\left|\frac{y}{x}\right|^\theta, \qquad x,y\in\mathbb{R}.
\label{SV2-andreas}
\end{equation}
\end{itemize}
\end{proposition}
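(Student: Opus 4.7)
The plan is to reduce both parts to an asymptotic analysis of first-passage probabilities for the underlying MAP $\xi$, then apply the Esscher transform of Proposition \ref{p:mg and com} to pass from an event of decaying probability to an almost-sure event, and finally invoke a Markov renewal theorem on the overshoot distribution to compute the limit.

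First, using the Lamperti--Kiu identity already written in the opening of Section \ref{Cramer}, one has in case (a), for $|y|<a$,
\[
\mathbb{P}_y(\tau^{(-a,a)^{\mathrm{c}}}<\tau^{\{0\}}) = \mathbf{P}_{0,\,\mathrm{sign}(y)}(T^+_{\log(a/|y|)}<\infty),
\]
so, writing $z=\log(a/|y|)$, the first claim becomes $\lim_{z\uparrow\infty} \mathrm{e}^{\theta z}\mathbf{P}_{0,i}(T^+_z<\infty) = v_i(\theta)C_\theta$, $i\in E$. The second equality in \eqref{SV1} is then immediate by taking ratios. Case (b) reduces analogously to $\lim_{z\downto -\infty}\mathrm{e}^{\theta z}\mathbf{P}_{0,i}(T^-_z<\infty)=v_i(\theta)\tilde{C}_\theta$.

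Next I apply Proposition \ref{p:mg and com} with $\gamma=\theta$; since $\chi(\theta)=0$ the martingale reduces to $M(t,\theta)=\mathrm{e}^{\theta \xi(t)}v_{J(t)}(\theta)/v_i(\theta)$, and the tilted MAP $(\xi,J)$ under $\mathbf{P}^\theta$ has leading eigenvalue $\chi_\theta(z)=\chi(z+\theta)$, so $\chi'_\theta(0)=\chi'(\theta)$. In case (a), convexity of $\chi$ together with $\chi(0)=\chi(\theta)=0$ forces $\chi'(\theta)>0$, hence under $\mathbf{P}^\theta$ the process $\xi$ drifts to $+\infty$ almost surely and $T^+_z<\infty$ a.s.\ for every $z\in\mathbb{R}$. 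Reversing the change of measure at the stopping time $T^+_z$ gives the identity
\[
\mathrm{e}^{\theta z}\mathbf{P}_{0,i}(T^+_z<\infty) = v_i(\theta)\,\mathbf{E}^{\theta}_{0,i}\!\left[\frac{\mathrm{e}^{-\theta(\xi(T^+_z)-z)}}{v_{J(T^+_z)}(\theta)}\right].
\]
Case (b) is treated symmetrically using $T^-_z$ under $\mathbf{P}^\theta$.

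It remains to pass to the limit inside the expectation. Under $\mathbf{P}^\theta$ the ascending ladder height process of $\xi$ (together with $J$ sampled at the ladder epochs) is a Markov additive renewal process; it has strictly positive finite mean increment (a consequence of $\chi'(\theta)\in\mathbb{R}$ together with the strong law in \eqref{mmm} for the tilted MAP), and it inherits non-lattice support from assumption (A). By the Markov renewal theorem (in the form of Alsmeyer), the joint overshoot $(\xi(T^+_z)-z,\,J(T^+_z))$ converges in distribution as $z\upto\infty$ to a proper law on $[0,\infty)\times E$ that does not depend on the initial state $i$. Since $\mathrm{e}^{-\theta(\xi(T^+_z)-z)}\le 1$ and $\min_j v_j(\theta)>0$, dominated convergence yields
\[
\lim_{z\upto\infty}\mathbf{E}^{\theta}_{0,i}\!\left[\frac{\mathrm{e}^{-\theta(\xi(T^+_z)-z)}}{v_{J(T^+_z)}(\theta)}\right]=C_\theta\in(0,\infty),
\]
independent of $i$, with positivity coming from properness of the limiting overshoot. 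The analogous argument for $-\xi$ under $\mathbf{P}^\theta$ handles case (b) and produces $\tilde{C}_\theta$.

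The main obstacle will be verifying the hypotheses of the Markov renewal theorem for the ladder process, in particular that non-lattice support of the MAP is inherited by the ascending (resp.\ descending) ladder height MAP under the tilted measure, and that the ladder mean inherited from $\chi'(\theta)$ is genuinely finite and strictly positive; everything else amounts to standard manipulations of the Esscher transform combined with the Lamperti--Kiu reduction.
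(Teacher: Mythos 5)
Your proposal is correct and arrives at the same Cram\'er-type asymptotic, but by a genuinely different route from the paper. The paper proves the key estimate $\mathrm{e}^{\theta y}\mathbf{P}_{0,i}(T_y^+<\infty)\to v_i(\theta)C_\theta$ by first splitting the first-passage event into a jump-over part and a creeping part (Lemma \ref{creeplemma}): the former is rewritten via the compensation formula as a convolution of the ascending ladder potential measure $U^+_{i,j}$ against the tails of the ladder jump/L\'evy measures, and the latter is expressed through the potential density. Both pieces are then tilted by $\theta$ to replace $U^+_{i,j}$ with the unkilled renewal measure $U^{\theta,+}_{i,j}$, the resulting kernels are checked to be directly Riemann integrable, and the Markov Additive Renewal Theorem (Theorem \ref{prop}) is applied, ultimately producing a closed-form expression for $C_\theta$. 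You instead apply the Esscher reversal directly at the stopping time $T^+_z$, which converts $\mathrm{e}^{\theta z}\mathbf{P}_{0,i}(T^+_z<\infty)$ into $v_i(\theta)$ times the $\mathbf{P}^\theta$-expectation of a bounded continuous functional of the joint overshoot $(\xi(T^+_z)-z,\,J(T^+_z))$, and then invoke the Markov renewal theorem in the guise of weak convergence of the overshoot. This shortcut is cleaner: it handles creeping and jumping in one stroke (the atom of the overshoot at zero is automatically accounted for) and bounded convergence is immediate since $\mathrm{e}^{-\theta u}/v_j(\theta)\le 1/\min_j v_j(\theta)$. What you lose is the explicit identification of $C_\theta$; your constant is defined implicitly as an integral against the limiting overshoot law. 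The two technical points you flag as ``the main obstacle'' --- that the ascending ladder MAP under $\mathbf{P}^\theta$ has a finite, strictly positive stationary mean increment, and that the ladder process inherits the non-lattice property from assumption (A) --- are exactly the hypotheses the paper also must (and does, via Asmussen's Corollary XI.2.5 and assumption (A), with a brief justification) verify in order to invoke the renewal theorem, so you have correctly located where the genuine work lies. Part (b) by duality (replacing $\xi$ by $-\xi$ and $\theta$ by $-\theta$) coincides with the paper's treatment.
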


This result will be proved below after some preliminary lemmas. 
Recalling the discussion from \cite{K}, an excursion theory for MAPs reflected in their running maxima exists with strong similarities to the case of L\'evy processes. Specifically, there is a MAP, say $(H^+(t), J^+(t))_{t\geq 0}$, with the property that $H^+ $ is non-decreasing  with the same range as the running maximum process $\sup_{s\leq t}\xi(s)$, $t\geq 0$. Moreover, the trajectory of the associated Markov chain $J^+$ agrees with the chain $J$ on the times of increase of the  running maximum. We also refer to the Appendix in \cite{DDK} for further information on classical excursion theory for MAPs.

As an increasing MAP, the process $(H^+, J^+)$ has associated to it a number of characteristics. When $J^+=\pm1$, the process $H^+$ has the increments of a subordinator with drift $\delta_{\pm1}$ and L\'evy measure $\Upsilon_{\pm1}$ and is sent to a cemetery state $\{+\infty\}$ at rate $q_{\pm1}$. When $J^+$ jumps from $i$ to $j$ with $i,j\in\{-1,1\}$ and $i\neq j$, the process $H^+$ experiences an independent jump with distribution $F^+_{i,j}$ at rate $\Lambda_{i,j}$. 
For convenience, we will introduce the Laplace matrix exponent $\boldsymbol\kappa$ in the form 
\[
\mathbb{E}_{0,i}[{\rm e}^{-\lambda H^+(t)}; J^+(t) = j] = [{\rm e}^{-\boldsymbol{\kappa}(\lambda) t }]_{i,j}, \qquad \lambda \geq 0,
\]
where, 
\[
\boldsymbol{\kappa}(\lambda) = {\rm diag}(\Phi_1(\lambda), \Phi_{-1}(\lambda)) - \boldsymbol{\Lambda}\circ{\boldsymbol K}(\lambda), \qquad \lambda \geq 0,
\]
where  for $i =\pm1$, $\Phi_i(\lambda)$ is the Laplace exponent of the subordinator encoding the dynamics of $H$ when $J^+  = i$, $\boldsymbol\Lambda$ is the intensity matrix of $J^+$ and ${\boldsymbol K}(\lambda)_{i,j} =\int_{(0,\infty)} {\rm e}^{-\lambda x} F^+_{i,j}({\rm d}x)$ for $i,j=\pm1$ with $i\neq j$ and otherwise ${\boldsymbol K}(\lambda)_{i,i} =1$, for $i=\pm1$.

In the next lemma we write  the crossing probability of interest in terms of the potential measures 
\[
 U_{i,j}^+({\rm d}x)=\int_{0}^{\infty}\mathbf{P}_{0,i}(H^+(t)\in {\rm d}x,J^+(t)=j){\rm d}s, \qquad x\geq 0, i,j\in\{-1,1\}.
 \]

\begin{lemma}\label{creeplemma} The probability of first passage over a threshold  can be decomposed into the probability of creeping and the probability of jumping over it. 
\item[(a)] For $y>0$,
\begin{equation}\label{eq:last}
\mathbf{P}_{0,i}(T_{y}^+ <\infty,H^+_{T_y^+}>y)=\sum_{ j,k =\pm1}\int_{0}^{y}U^+_{i,j}({\rm d}z)\left[\mathbf{1}_{(k\neq j)}\Lambda_{j,k}\overline{F}^+_{j,k}(y-z)+\mathbf{1}_{(k=j)}\overline{\Upsilon}_{j}(y-z)\right].
\end{equation}
\item[(b)] If $\delta_j>0$  for some $j = \pm1,$ 
then $U_{i,j}^+$ has a density on  $[0,\infty)${\color{black}\ for $i=\pm1$,  which has a continuous version, say $u^+_{i,j}$}. Moreover, for $y>0$,
$$p_i(y): = \mathbf{P}_i(T_y^+<\infty,H^+(T_y^+)=y)=\sum_{j=\pm1} \delta_j u^+_{i,j}(y),\qquad y> 0, i=\pm1,$$
where we understand $u^+_{i,j}\equiv 0$ if $\delta_j =0$. If $\delta_j=0$ for both $j = \pm1$, then $p_i(y) = 0$ for all $y>0$.
\end{lemma}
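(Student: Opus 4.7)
For part (a), the plan is to apply the compensation formula (L\'evy system identity) to the jump point process of the increasing MAP $(H^+, J^+)$. Since $H^+$ is non-decreasing, the event $\{T_y^+<\infty,\,H^+(T_y^+)>y\}$ is exactly the event that some jump of $H^+$ takes the process above $y$, and I would write
$$\mathbf{1}_{(T_y^+<\infty,\,H^+(T_y^+)>y)} = \sum_{s:\,\Delta H^+(s)>0}\mathbf{1}_{(H^+(s-)\leq y<H^+(s))},$$
with the sum running over jump times of $H^+$. The jump compensator of $(H^+,J^+)$ splits naturally into two pieces: while $J^+(t-)=j$ and the chain does not jump, $H^+$ jumps with L\'evy measure $\Upsilon_j(\mathrm{d}u)$; and when $J^+$ transitions from $j$ to $k\neq j$ at rate $\Lambda_{j,k}$, an independent jump of $H^+$ with law $F^+_{j,k}(\mathrm{d}u)$ is added. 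Taking $\mathbf{E}_{0,i}$-expectations of the above identity, replacing $(J^+(t-),H^+(t-))$ by $(J^+(t),H^+(t))$ on the complement of the Lebesgue-null set of chain-jump times, and applying Fubini to integrate out $t$, one recognises the potential measure $U^+_{i,j}(\mathrm{d}z)$ and recovers \eqref{eq:last} directly.

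For part (b), the strategy is first to establish the existence of a continuous density $u^+_{i,j}$ on $[0,\infty)$ when $\delta_j>0$, and then to identify the creeping probability $p_i(y)$. Existence follows from a skeleton decomposition: given the trajectory of the chain $J^+$ up to time $t$, the value $H^+(t)$ is a sum of independent subordinator increments over the state-segments together with the independent jumps at chain transitions; whenever the chain spends positive time in state $j$ (which is sure to happen under irreducibility), the strictly positive drift $\delta_j$ regularises the conditional law of $H^+(t)$ on $\{J^+(t)=j\}$, giving absolute continuity, and classical subordinator theory (Bertoin's continuity result for the potential density of a subordinator with strictly positive drift) then supplies a continuous version $u^+_{i,j}$ of the density of $U^+_{i,j}$ after integrating out $t$. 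For the identification of $p_i(y)$, I would combine part (a) with the trivial decomposition $\mathbf{P}_i(T_y^+<\infty)=p_i(y)+\mathbf{P}_i(T_y^+<\infty,\,H^+(T_y^+)>y)$ together with an expression for $\mathbf{P}_i(T_y^+<\infty)$ in terms of the potential and the killing rates. Passing to Laplace transforms in $y$ and using the matrix resolvent identity $\int_0^\infty e^{-\lambda y}\mathbf{U}^+(\mathrm{d}y) = \boldsymbol{\kappa}(\lambda)^{-1}$, with the decomposition $\boldsymbol{\kappa}(\lambda) = \lambda\,\mathrm{diag}(\delta_1,\delta_{-1}) + (\text{terms not involving the drift})$, one extracts the explicit formula $p_i(y) = \sum_j \delta_j u^+_{i,j}(y)$.

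The main obstacle is the identification step in (b). For a single L\'evy subordinator the creeping formula is a one-line consequence of the renewal identity, but in the MAP setting the matrix inversion of $\boldsymbol{\kappa}(\lambda)$ mixes drift and jump contributions across states, and one must argue carefully that the drift diagonal $\lambda\,\mathrm{diag}(\delta_1,\delta_{-1})$ factors out cleanly after inversion and Laplace inversion. A probabilistic alternative is to apply the strong Markov property at the last macroscopic jump of $H^+$ strictly before level $y$, so that only the drift $\delta_{J^+}$ carries the path continuously from $H^+$ just below $y$ up to exactly $y$, and to compute the associated infinitesimal density via the conditional law of $(H^+, J^+)$ given no jumps in a short window $[t, t+h]$, letting $h \downarrow 0$. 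Both routes reduce to classical Kesten--Bretagnolle-type calculations, but require some bookkeeping to track the dependence on the modulating chain.
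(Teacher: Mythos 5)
Part (a) matches the paper's approach: both apply the compensation formula to the jump point process of $(H^+,J^+)$, split the jumps into those that accompany a transition of $J^+$ (rate $\Lambda_{j,k}$, law $F^+_{j,k}$) and within-state subordinator jumps (L\'evy measure $\Upsilon_j$), and integrate out time to recognise the potential measure $U^+_{i,j}$. No issues here.

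For part (b) you correctly flag the identification of $p_i(y)=\sum_j\delta_j u^+_{i,j}(y)$ as the sticking point, and your Laplace-transform / resolvent route should work in principle, but the paper sidesteps the matrix inversion entirely with a single occupation-time identity. Integrating the creeping probability over levels and using the L\'evy--It\^o decomposition of $H^+$ along the running chain gives
\[
\int_0^a p_i(y)\,\dd y=\mathbf{E}_{0,i}\Bigl[H^+(T_a^+ -)-\textstyle\sum_{0<s<T_a^+}\Delta H^+(s)\,;\,T_a^+<\infty\Bigr]=\mathbf{E}_{0,i}\Bigl[\int_0^{T_a^+}\delta_{J^+(t)}\,\dd t\Bigr]=\sum_{j=\pm1}\delta_j\,U^+_{i,j}[0,a],
\]
where the last step uses that $H^+$ is nondecreasing so $\{t<T^+_a\}=\{H^+(t)\le a\}$. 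This yields the density of $U^+_{i,j}$ for each $j$ with $\delta_j>0$ and the formula $p_i=\sum_j\delta_j u^+_{i,j}$ almost everywhere in one stroke, with no Laplace inversion and no need to factor the drift out of $\boldsymbol\kappa(\lambda)^{-1}$. Also be a little careful with your ``regularisation by drift'' heuristic: conditionally on the chain skeleton, the law of $H^+(t)$ can still carry atoms when the jump components have finite activity, so the smoothing genuinely comes from integrating over $t$, which is exactly what the occupation identity encodes.

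The piece your plan leaves entirely unaddressed is continuity of $p_i$, which is needed to upgrade the a.e.\ identity to every $y>0$ and to extract a continuous version of $u^+_{i,j}$. Appealing to Bertoin's continuity result for a single-state subordinator potential does not transfer automatically to the modulated potential $U^+_{i,j}$. The paper supplies this with a probabilistic sandwich: via the Markov property at $T^+_x$ it shows
\[
\sum_{j=\pm1}p_{i,j}(x)p_j(\epsilon)\ \le\ p_i(x+\epsilon)\ \le\ \sum_{j=\pm1}p_{i,j}(\epsilon)p_j(x)+\mathbf{P}_{0,i}\bigl(O_x\in(0,\epsilon]\bigr),
\]
with $O_x:=H^+(T^+_x)-x$ the overshoot, and combines this with $\lim_{\epsilon\downarrow0}p_{i,j}(\epsilon)=\mathbf{1}(\delta_i>0)\mathbf{1}(i=j)$, which does reduce to the classical one-state Kesten result because for small levels the chain has not yet moved. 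This sandwich is what replaces the single-subordinator continuity lemma in the MAP setting; you should add it (or an equivalent argument) explicitly.
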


\begin{proof}
(a) Appealing to the {\color{black} compensation formula for the} Cox process that describes the  jumps in $H^+$, we may write for $y>0$,
\begin{eqnarray}
\mathbf{P}_{0,i}(T_{y}^+ <\infty)&=&\mathbf{E}_{0,i}\left[\sum_{0<s<\infty} \mathbf{1}_{(y-H^+(s-)>0)}\mathbf{1}_{(y-H^+(s)<0)}\right]\notag\\&=&
\sum_{j,k=\pm1}\mathbf{E}_{0,i}\left[\sum_{0<s<\infty} \mathbf{1}_{(y-H^+(s-)>0)}\mathbf{1}_{(\Delta H^+(s)>y-H^+(s-))}\mathbf{1}_{(J^+({s-})=j,J^+(s)=k)}\right]\notag\\
&=&\sum_{j,k=\pm1}\mathbf{1}_{(j\neq k)}\int_{0}^{\infty}\mathbf{P}_{0,i}(H^+(s-)<y,J^+({s-})=j)\Lambda_{j,k}\overline{F}^+_{j,k}(x-H^+(s-)){\rm d}s\notag\\
&&+\sum_{j=\pm1}\int_{0}^{\infty}\mathbf{P}_{0,i}(H^+(s-)<y,J^+({s-})=j) 
\overline{\Upsilon}_{j}(x-H^+(s-)){\rm d}s,
\label{firstpassage}
\end{eqnarray}
where $\Delta H^+(s)  = H^+(s) - H^+(s-)$,  $\overline{F}^+_{j,k}(x) = 1-F^+_{j,k}(x)$ and $\overline{\Upsilon}_{j}(x) =\Upsilon_j(x,\infty)$.
{\color{black}When we express the right-hand side of (\ref{firstpassage})  in terms of the potential measure we  get (\ref{eq:last}).}

(b) We first define, for $a>0$,
\begin{equation} M_i(a):=\int_0^a \mathbf{P}_{0,i}(H(T_y^+)=y,T_y^+<\infty)dy = \int_0^a p_i(y)dy.
\label{absctsM}
\end{equation}
The analogue of the L\'evy-It\^o decomposition for subordinators tells us that, up to killing at rate $q_{\pm1}$, when $J^+$ is in state $\pm1$,
$$ H^+(t)=\int_0^{t}\delta_{J(t)}dt+\sum_{0<s<t}\Delta H^+(s), \qquad t\geq 0.$$
Then,
$$ M_i(a)=\mathbf{E}_{0,i}\left[H^+(T_{a}^+ -) -\sum_{0<s<T_{a}^+}(H^+(s)-H^+(s-)) \,;\, T^+_a <\infty\right]=\mathbf{E}_{0,i}\left[\int_0^{T_a^+}\delta_{J^+(t)}dt\,;\, T^+_a <\infty \right].$$
Hence, for $a>0$,
$$ M_i(a)=\mathbf{E}_i\left[\int_0^{\infty}\mathbf{1}_{(0\leq H^+(t) \leq a)}\delta_{J(t)}\dd t\right]=\sum_{j=\pm1} \delta_j U^+_{i,j}[0,a].$$
Noting from \eqref{absctsM} that  that $M_i$ is almost everywhere differentiable on $(0,\infty)$, the above equality  tells us that, for $j$ such that $\delta_j\neq 0$ the potential measure $U^+_{i,j}$ has a density. 
Otherwise, if $\delta_j=0$ for both $j  = \pm 1$, then $p_i(y) = 0$ for Lebesgue almost every $y>0$.

 We define, for each $i,j = \pm 1$ and $x>0$,
$$ p_{i,j}(x)=\mathbf{P}_{0,i}(T_{x}^+ <\infty,H^+(T_x^+)=x,J^+(T_x^+)=j)  \text{ such that  } p_i(x)=\sum_{j=\pm1} p_{i,j}(x).$$
Fix $i\in\{-1,1\}$. We want to show that $p_i(x)$ is continuous. For that, we shall use the fact that  
\begin{equation}\label{eq:smallCreep}
\lim_{\epsilon\downarrow 0} p_{i,j}(\epsilon)=\mathbf{1}(\delta_i >0)\mathbf{1}(i=j).
\end{equation}
This is due to the fact that the stopping time $T:=\inf\{s>0:J^+(s)\neq i \text{ or }H^+(s)=+\infty\}$ is exponentially distributed while the time $T^+_{\epsilon}\downarrow 0$ as $\epsilon\downarrow 0$. Hence, on $\{t<T\}$, $H^+_t$ behaves as a (killed) L\'evy subordinator and so $T^+_{\epsilon}< T$ with increasing probability, tending to 1 as $\epsilon\downarrow0$.  Hence, the result follows from the classical case of L\'evy subordinators; see \cite{kesten}.

{\color{black}By the Markov property we} have the lower bound
\begin{equation}\label{eq: below}
p_{i}(x+\epsilon)\geq \mathbf{P}_{0,i}(H^+(T_{x}^+)=x,H^+(T_{x+\epsilon}^+)=x+\epsilon,T_{x+\epsilon}^+<\infty)=\sum_{j= \pm1}p_{i,j}(x)p_j(\epsilon) .
\end{equation}
If we take the limit $\epsilon\downarrow 0$ and use \eqref{eq:smallCreep}, then we have that
$$\lim_{\epsilon\downarrow 0}p_i(x+\epsilon)\geq \sum_{j=\pm1} p_{i,j}(x)\mathbf{1}(\delta_j>0)=\sum_{j=\pm1}p_{i,j}(x)=p_i(x).$$
On the other hand, we can split the behavior of creeping over $x+\epsilon$ into two types 
\begin{eqnarray*}
p_i(x+\epsilon)&= &\mathbf{P}_{0,i}(H^+(T_{x}^+)=x,H^+(T_{x+\epsilon}^+)=x+\epsilon,T_{x+\epsilon}^+<\infty)\\
&& +\mathbf{P}_{0,i}(H^+(T_x^+)>x,H^+(T_{x+\epsilon}^+)=x+\epsilon,T_{x+\epsilon}^+<\infty).
\end{eqnarray*} 
The first probability on the right-hand side above corresponds to the right-hand side of (\ref{eq: below}) and we can bound the second term by the event that $\{0<O_x\leq\epsilon\}$, where we define the overshoot $O_x:=H^+(T_x^+)- x
$. Hence, we deduce that
\begin{eqnarray*}
p_i(x+\epsilon)\leq\sum_{j=\pm1}p_{i,j}(\epsilon)p_j(x) +\mathbf{P}_{0,i}(O_x\in (0,\epsilon]).
\end{eqnarray*} 
The second probability on the right-hand side above goes to zero as $\epsilon\to0$. If we now combine this inequality with \eqref{eq: below} and take the limit $\epsilon\downarrow 0$, then we can then show that $$\lim_{\epsilon\downarrow 0}p_i(x+\epsilon)=p_i(x)=\sum_{j=\pm1}p_{i,j}(x).$$
We can also  show in a similar way  that  $\lim_{\epsilon\downarrow 0}p_i(x-\epsilon)=p_i(x)$ and hence $p_i$ is  continuous. Note that the preceding reasoning is  valid without discrimination for the case that $p_i$ is almost everywhere equal to zero. The proof is now complete. 
\end{proof}

Understanding the asymptotic of $\mathbf{P}_{0,i}(T_{y}^+ <\infty)$ is now a matter of Markov additive renewal theory. In this respect, let us say some more words about the the Markov additive renewal measure $U_{i,j}$.

We will restrict the forthcoming discussion to the setting that $\theta >0$. Recall from the discussion at the end of Section \ref{ss:MAP} that this implies $\lim_{t\to\infty}\xi(t) = -\infty$, where $\xi$ is the  MAP underlying the rssMp. A consequence of this observation is that the process $H^+$ experiences killing. To be more precise it has   killing rates which we previously denoted by  $q_{\pm1}>0$. This makes the measures $U_{i,j}^+$ finite. As with classical renewal theory, we can use the existence of the Cram\'er number $\theta$ to renormalise the measures $U^+_{i,j}$ so that they are appropriate for use with asymptotic Markov additive renewal theory. 

Appealing to the exponential change of measure described in Proposition \ref{p:mg and com}, we note that the law of $(H^+,J^+)$ under $\mathbf{P}_{0,i}^\theta$ satisfies 
\[
\mathbf{P}_{0,i}^{\theta}(H^+(t) \in {\rm d}x, J^+(t)= j) = \frac{v_j(\theta)}{v_i(\theta)}{\rm e}^{\theta x}\mathbf{P}_{0,i}(H^+(t) \in {\rm d}x, J^+(t)= j), \qquad i,j=\pm1, x\geq 0.
\]
In particular, the role of $\boldsymbol\kappa$ for $(H^+, J^+)$ under $\mathbb{P}^\theta_{0,i}$, $i=\pm1$ is played by 
\[
\boldsymbol{\kappa}_\theta(\lambda) = \boldsymbol{\kappa}(\lambda -\theta ), \qquad \lambda\geq 0.
\]
Hence, we have that
\[
U^{\theta, +}_{i,j}({\rm d}x) : = \int_0^\infty \mathbf{P}_{0,i}^{\theta}(H^+(t) \in {\rm d}x, J^+(t)= j) {\rm d}t = \frac{v_j(\theta)}{v_i(\theta)}{\rm e}^{\theta x}U^+_{i,j}({\rm d}x), \qquad x\geq 0.
\]
Again,  referring to the discussion at the end of  Section \ref{ss:MAP}, since $\lim_{t\to\infty}\xi(t) = \infty$ almost surely under {\color{black}$\mathbf{P}^{\theta}_{0,i}$}, we may now claim that the adjusted Markov additive renewal measure $U^\theta_{i,j}({\rm d}x)$ is that of an unkilled subordinator MAP.

\begin{lemma}\label{cramerestimate} Suppose that $\theta>0$.
There exists a constant $C_\theta>0$, such that, as $y\to\infty$, 
$$ {\rm e}^{\theta y}\mathbf{P}_{0,i}(T_{y}^+ <\infty)\to v_i(\theta) C_\theta. $$

\end{lemma}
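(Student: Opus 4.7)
The plan is to decompose the first passage of the ladder height MAP $(H^+, J^+)$ into its overshoot and creeping contributions via Lemma \ref{creeplemma}, to apply the Esscher transform of Proposition \ref{p:mg and com} with parameter $\theta$ in order to convert the killed ladder subordinator into an unkilled one, and finally to invoke the Markov additive renewal theorem to extract the exponential asymptotics.

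Concretely, combining parts (a) and (b) of Lemma \ref{creeplemma} we may write
\[
\mathbf{P}_{0,i}(T^+_y<\infty) = \sum_{j,k=\pm1}\int_0^y U^+_{i,j}(\mathrm{d}z)\bigl[\mathbf{1}_{(k\ne j)}\Lambda_{j,k}\overline F^+_{j,k}(y-z) + \mathbf{1}_{(k=j)}\overline\Upsilon_j(y-z)\bigr] + \sum_{j=\pm1}\delta_j u^+_{i,j}(y).
\]
Using the identity $U^+_{i,j}(\mathrm{d}z) = (v_i(\theta)/v_j(\theta))e^{-\theta z}U^{\theta,+}_{i,j}(\mathrm{d}z)$ displayed immediately above the statement (with its analogue for the densities), multiplying by $e^{\theta y}$ and pulling out $v_i(\theta)$ turns each overshoot summand into a Markov additive renewal convolution
\[
\frac{v_i(\theta)}{v_j(\theta)}\int_0^y g_{j,k}(y-z)\,U^{\theta,+}_{i,j}(\mathrm{d}z),\qquad g_{j,k}(u):= e^{\theta u}\bigl[\mathbf{1}_{(k\ne j)}\Lambda_{j,k}\overline F^+_{j,k}(u) + \mathbf{1}_{(k=j)}\overline\Upsilon_j(u)\bigr],
\]
and each creeping summand into $(v_i(\theta)/v_j(\theta))\delta_j u^{\theta,+}_{i,j}(y)$.

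By assumption (A), $\chi'(\theta)$ exists in $\mathbb{R}$, so under $\mathbf{P}^\theta$ the MAP $\xi$ drifts to $+\infty$ at finite speed and $(H^+, J^+)$ is an unkilled subordinator MAP whose modulating chain $J^+$ has a stationary distribution, say $\pi^\theta$, with expected step $\mu_\theta\in(0,\infty)$. The non-lattice property is preserved by the Esscher transform. The Markov additive renewal theorem (see e.g.\ Chapter VII of \cite{Asm-apq2}) applied to $U^{\theta,+}_{i,j}$ then yields, for each directly Riemann integrable $g_{j,k}$,
\[
\lim_{y\to\infty}\int_0^y g_{j,k}(y-z)\,U^{\theta,+}_{i,j}(\mathrm{d}z) = \frac{\pi^\theta_j}{\mu_\theta}\int_0^\infty g_{j,k}(u)\,\mathrm{d}u,
\]
together with the renewal density limit $u^{\theta,+}_{i,j}(y)\to \pi^\theta_j/\mu_\theta$ when $\delta_j>0$. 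Summing the two types of contribution gives $\lim_{y\to\infty}e^{\theta y}\mathbf{P}_{0,i}(T^+_y<\infty)= v_i(\theta)C_\theta$ with
\[
C_\theta := \frac{1}{\mu_\theta}\sum_{j=\pm1}\frac{\pi^\theta_j}{v_j(\theta)}\biggl\{\delta_j + \int_0^\infty e^{\theta u}\Bigl[\sum_{k\ne j}\Lambda_{j,k}\overline F^+_{j,k}(u) + \overline\Upsilon_j(u)\Bigr]\mathrm{d}u\biggr\},
\]
a positive constant independent of $i$, as required.

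The main obstacle will be the direct Riemann integrability of $g_{j,k}$, which reduces to the estimate $\int_0^\infty e^{\theta u}\overline\Upsilon_j(u)\,\mathrm{d}u<\infty$ (and its analogue for $F^+_{j,k}$). This is precisely guaranteed by the fact that $\chi(\theta)=0$ forces the $\theta$-tilted L\'evy measure of $H^+$ to be a proper L\'evy measure with finite exponential moment, while $\chi'(\theta)\in\mathbb{R}$ supplies the finite first moment $\mu_\theta$ underlying the renewal theorem. Positivity of $C_\theta$ is automatic since $H^+$ is non-trivial (otherwise $\xi$ would not reach arbitrarily large levels under $\mathbf{P}^\theta$).
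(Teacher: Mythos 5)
Your proposal is correct and follows essentially the same route as the paper: decompose $\mathbf{P}_{0,i}(T^+_y<\infty)$ into overshoot and creeping parts via Lemma \ref{creeplemma}, tilt by the Cram\'er number $\theta$ to turn the killed ladder MAP into an unkilled subordinator MAP, and then apply Markov additive renewal theory to the exponentially weighted tails, plus the renewal density limit for the creeping term. The only (cosmetic) divergence is that you invoke a continuous-time Markov additive renewal theorem directly, whereas the paper Poissonizes time via i.i.d.\ exponential marks $\Theta_n$ so as to use the discrete-time renewal theorem stated as Theorem \ref{prop} in the Appendix, together with Alsmeyer's density result for the creeping term; your constant $C_\theta$ agrees with theirs after the integration-by-parts identity $[q_j - (\boldsymbol{\kappa}(-\theta)\boldsymbol{1})_j]/\theta = \delta_j + \int_0^\infty e^{\theta s}\bigl[\sum_{k\neq j}\Lambda_{j,k}\overline F^+_{j,k}(s) + \overline\Upsilon_j(s)\bigr]\,\mathrm{d}s$.
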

\begin{proof}
Picking up equation (\ref{eq:last}), we  have, for $i = \pm1$, 
\begin{eqnarray}
\lefteqn{
{\rm e}^{\theta y}\mathbf{P}_{0,i}(T_{y}^+ <\infty,\, H^+_{T_y^+}>y)}
\notag\\
&&=v_i(\theta)\sum_{ j,k =\pm1}\int_{0}^{y}{\rm e}^{\theta(y-z)}\frac{1}{v_j(\theta)}U^{\theta,+}_{i,j}({\rm d}z)\left[\mathbf{1}_{(k\neq j)}\Lambda_{j,k}\overline{F}^+_{j,k}(y-z)+\mathbf{1}_{(k=j)}\overline{\Upsilon}_{j}(y-z)\right].
\label{rewrite}
\end{eqnarray}
\noindent Our aim is to convert this into a form that we can apply the discrete-time Markov Additive Renewal Thoerem  \ref{prop} in  the Appendix.

To this end, we  define the sequence of random times $\Theta_1,\Theta_2,\cdots$ such that $\Theta_{i+1}-\Theta_i$ are independent and  exponentially distributed with parameter $1$. For convenience, define $\Theta_0 = 0$. We want to relate $(H^+,J^+)$ to a discrete-time Markov additive renewal process $(\Xi_n, M_n)$,  $n\geq 0$, such that 
\[
 \Delta_n:=\Xi_{n+1} - \Xi_n = H^+_{\Theta_{n+1}}-H^+_{\Theta_n} \text{ and } M_n=J^+({\Theta_n}), \qquad n\geq 0 .\]
 A future quantity of interest is the stationary mean increment $\mu^+_\theta: = \mathbf{E}^\theta_{0,\boldsymbol{\pi}^{\theta}}[H_1(\Theta_1)]$, where $\boldsymbol{\pi}^{\theta} = (\pi^{\theta}_{1}, \pi^{\theta}_{-1})$ is the stationary distribution of $J$ (and hence of $J^+$ since it is described pathwise by J sampled at a sequence of stopping times) under $\mathbf{P}^\theta$. In this respect, we note from Corollary 2.5 in Chapter XI of \cite{Asm-apq2} that, 
\begin{eqnarray}
\mu^+_\theta  
&=&\int_0^\infty {\rm e}^{-t}\mathbf{E}^\theta_{0,\boldsymbol{\pi}^{\theta}}[H^+(t)]{\rm d}t \notag\\
&=&\int_0^\infty {\rm e}^{-t}[\chi^+_\theta(0)t + \boldsymbol{\pi}^{\theta}\cdot \boldsymbol{k}^{\theta} -  \boldsymbol{\pi}^{\theta}\cdot {\rm e}^{\boldsymbol{\Lambda}^\theta t}\boldsymbol{k}^{\theta}]{\rm d}t \notag\\
&=& \chi^+_\theta(0) +  \boldsymbol{\pi}^{\theta}\cdot \boldsymbol{k}^{\theta} -  \boldsymbol{\pi}^{\theta}\cdot (\boldsymbol{\Lambda}^\theta - \boldsymbol{I})^{-1}\boldsymbol{k}^{\theta},\label{similarlater}
\end{eqnarray}
where $\chi_\theta^+(0)$ is the leading eigenvalue of $\boldsymbol{\kappa}_\theta(0)$, $\boldsymbol{k}^\theta=\boldsymbol{v}'(\theta)$ and $\boldsymbol{\Lambda}^\theta = \boldsymbol{\kappa}_\theta(0)$. All of these quantities are guaranteed to exist thanks to the assumption (A); see for example Section 2 of Chapter XI in \cite{Asm-apq2}.

Note, moreover, that 
\begin{eqnarray}
U^{\theta,+}_{i,j}({\rm d}x)& = & \int_{0}^{\infty}\mathbf{P}^\theta_{0,i}(H_{t}^+\in {\rm d}x,\, J^+(t)=j){\rm d}t\notag\\
& = & \sum_{n =1}^{\infty}\int_{0}^{\infty}{\rm e}^{-t} \frac{t^{n-1}}{(n-1)!} \mathbf{P}^\theta_{0,i}(H_{t}^+\in {\rm d}x,\, J^+(t)=j){\rm d}t\notag\\
&=& \sum_{n=1}^{\infty}\mathbf{P}^\theta_{0,i}(H_{\Theta_n}\in {\rm d}x,J_{\Theta_n}=j)\notag\\
&=:&R_{i,j}({\rm d}x)-\delta_0({\rm d}x)\mathbf{1}(i=j),\label{similarspirit}
\end{eqnarray} 
where, on the right-hand side, we have used the notation of the discrete-time Markov additive renewal measure in the Appendix.
 
 Turning back to (\ref{rewrite}), if we define 
\begin{equation}
g_j(x)=\sum_{k=\pm1}\frac{1}{v_j(\theta)}{\rm e}^{\theta x}\left[\mathbf{1}(k\neq j)\Lambda_{j,k}\overline{F}_{j,k}(x)+\mathbf{1}(k=j)\overline{\Upsilon}_{j}(x)\right],\qquad x\geq 0,
\label{checkint}
\end{equation}
for $j =\pm1$,
 then, as soon as we can verify that these functions are   
directly  Riemann integrable, then we can apply the conclusion of Theorem \ref{prop} in the Appendix and conclude that 
\begin{eqnarray*}
\lefteqn{
\lim_{y\to\infty}{\rm e}^{\theta y}\mathbf{P}_{0,i}(T_{y}^+ <\infty,\, H^+_{T_y^+}>y)}
\\
&&=v_i(\theta)\sum_{ j,k =\pm1}\frac{\pi^{\theta}_j}{v_j(\theta)\mu^+_\theta}\int_{0}^{\infty}{\rm e}^{\theta s}\left[\mathbf{1}_{(k\neq j)}\Lambda_{j,k}\overline{F}^+_{j,k}(s)+\mathbf{1}_{(k=j)}\overline{\Upsilon}_{j}(s)\right]{\rm d}s,
\end{eqnarray*}
where $\pi^{\theta}_{j}$, $j=\pm1$ is the stationary distribution of the chain $J^+$ under {\color{black}$\mathbf{P}^{\theta}_{x,i}$}, $x\in\mathbb{R}$, $i=\pm1$.
Note, moreover that, from Lemma \ref{creeplemma}, together with Theorem 1.2 of  \cite{Als2014},
\[
{\rm e}^{\theta y}\mathbb{P}_i(T_y^+<\infty,H^+_{T_y^+}=y)=v_i(\theta)\sum_{j=\pm1}\frac{1}{v_j(\theta)}\delta_j u^{\theta, +}_{i,j}(y)\to v_i(\theta)\sum_{j = \pm1} \delta_j \frac{\pi^{\theta}_j}{v_j(\theta)\mu^+_\theta},
\]
as $y\to\infty$.

To finish the proof we must thus  verify the direct Riemann integrability of $g_j(x)$, $j = \pm1$ in (\ref{checkint}). Note however, that $g_j(x)$ is the product of ${\rm e}^{\theta x}$ and a monotone decreasing function, hence it suffices to check that $\int_0^\infty g_j(x){\rm d}x<\infty$, $j = \pm1$. To this end, remark that, for $\lambda$ in the domain where $\boldsymbol{\kappa}$ is defined, 
\[
(\boldsymbol{\kappa}(\lambda)\boldsymbol{1})_j = q_j + \delta_j\lambda +  \int_0^\infty (1- {\rm e}^{-\lambda x})\Upsilon_j({\rm d}x) + {\color{black}\sum_{k=\pm1}}\mathbf{1}_{(j\neq k)}\Lambda_{j,k}\int_0^\infty {\rm e}^{-\lambda x}F_{j,k}(\dd x), \qquad j = \pm1.
\]
In particular, with an integration by parts, we have 
\[
\frac{q_j - (\boldsymbol{\kappa}(-\theta)\boldsymbol{1})_j }{\theta} =\delta_j +  \int_{0}^{\infty}{\rm e}^{\theta s}\left[{\color{black}\sum_{k=\pm1}}\mathbf{1}_{(k\neq j)}\Lambda_{j,k}\overline{F}^+_{j,k}(s)+\mathbf{1}_{(k=j)}\overline{\Upsilon}_{j}(s)\right]{\rm d}s,\qquad j = \pm1,
\]
where the left-hand side is finite thanks to the assumption (A). This completes the proof, albeit to note that 
\[
\lim_{y\to\infty}{\rm e}^{\theta y}\mathbb{P}_i(T_y^+<\infty) = v_i(\theta)\sum_{j = \pm1}\frac{\pi^{\theta}_j[q_j - (\boldsymbol{\kappa}(-\theta)\boldsymbol{1})_j ]}{\theta v_j(\theta)\mu^+_\theta},
\]
which identifies explicitly the constant $C_\theta$ in the statement of the lemma.
\end{proof}

\begin{proof}[Proof of Proposition \ref{regvar}] 
First assume that $\theta>0$.
A particular consequence of Lemma \ref{cramerestimate} is that 
\[
\lim_{a\to\infty}\frac{\mathbb{P}_y(\tau^{(-a,a)^{{\rm c}}}<\tau^{\{0\}})}{\mathbb{P}_x(\tau^{(-a,a)^{{\rm c}}}<\tau^{\{0\}})} = \lim_{a\to\infty}\frac{ \mathbf{P}_{0, {\rm sign}(y)}(T^+_{\log (a/|y|)}<\infty)}{ \mathbf{P}_{0, {\rm sign}(x)}(T^+_{\log (a/|x|)}<\infty)} = \frac{v_{{\rm sign}(y)}(\theta)}{v_{{\rm sign}(x)}(\theta)}\left|\frac{y}{x}\right|^\theta, \qquad x,y\in\mathbb{R}.
\]

\bigskip

Now we turn our attention to the case that $\theta<0$. We appeal to duality and write 
\[
\mathbb{P}_x(\tau^{(-a,a)}<\infty) = \mathbf{P}_{(\log|x|, {\rm sign}(x))}(T^-_{\log a}<\infty) =  \tilde{\mathbf{P}}_{(-\log|x|, {\rm sign}(x))}(T^+_{-\log a}<\infty), 
\]
where under $\tilde{\mathbf{P}}_{x,i}$, $x\in\mathbb{R}$, $i=\pm1$, is the law of $(-\xi, J)$. Note, the associated matrix exponent of this process is $\tilde{\boldsymbol F}(z):={\boldsymbol F}(-z)$, whenever the right-hand side is well defined. In particular, we note that $\tilde{\boldsymbol F}(-\theta) =0$, which is to say that $-\theta>0$ is the Cram\'er number for the process $(-\xi, J)$. Moreover, 
$
\tilde{\boldsymbol F}(-\theta){\boldsymbol v}(\theta):={\boldsymbol F}(\theta){\boldsymbol v}(\theta) = 0, 
$
which is to say that  $\tilde{\boldsymbol v}(-\theta) = {\boldsymbol v}(\theta)$.
The first part of the proof can now be re-cycled to deduce the conclusions in part (b) of the statement of the proposition.
\end{proof}

\begin{proof}[Proof of Theorem \ref{one}] 
The (super)martingale (\ref{itisamartingale}) applies an exponential change of measure to $(\xi, J)$, albeit on the sequence of stopping times $\varphi(t)$, for $t< \tau^{\{0\}}$. As the change of measure (\ref{MAPCOM}) keeps $(\xi,J)$ in the class of MAPs, it follows that $\mathbb{P}^\circ_x$, $x\in\mathbb{R}\backslash\{0\}$, corresponds to the law of a rssMp. 

In the case of (a), recalling the discussion preceding Section \ref{section-rssMp}, the underlying MAP for $(X, \mathbb{P}^\circ_x)$, $x\in\mathbb{R}\backslash\{0\}$ drifts to $+\infty$. This means that under the change of measure, $X$ is a rssMp that never touches the origin, i.e. it is a conservative process. In the case of (b), the underlying MAP drifts to $-\infty$ and hence, under the change of measure $X$ is continuously absorbed at the origin, so it is non-conservative.

{\color{black} For the proof of (a), we  follow a standard line of reasoning that can be found, for example, in \cite{Chaumont96}. Appealing to the Markov property, self-similarity,  Fatou's Lemma and  (\ref{SV1}),  we have, for $A\in \mathcal{F}_t$,
\begin{eqnarray*}
\lefteqn{
{\color{black}\liminf_{a\to \infty}} \mathbb{P}_x(A{\color{black}\cap\{t<\tau^{(-a,a)^{\rm c}}\}}\,|\,\tau^{(-a,a)^{\rm c}}<\tau^{\{0\}})
}\notag\\
&& {\color{black}=\liminf_{a\to \infty}}\mathbb{E}_{x}\left[\mathbf{1}_{(A, \, t<\tau^{\{0\}}\wedge \tau^{(-a,a)^{\rm c}})}\frac{\mathbb{P}_{X_{t}}(\tau^{(-a,a)^{\rm c}}<\tau^{\{0\}})}{\mathbb{P}_{x}(\tau^{(-a,a)^{\rm c}}<\tau^{\{0\}})}\right]\notag\\
&&\geq \mathbb{E}_{x}\left[\mathbf{1}_{(A, \, t<\tau^{\{0\}})} {\color{black}\liminf_{a\to \infty}}\frac{\mathbb{P}_{a^{-1}X_{t}}(\tau^{(-1,1)^{\rm c}}<\tau^{\{0\}})}{\mathbb{P}_{a^{-1}x}(\tau^{(-1,1)^{\rm c}}<\tau^{\{0\}})}\right]\notag\\
&&=\mathbb{E}_x\left[\mathbf{1}_{(A, \, t< \tau^{\{0\}})}\frac{h_\theta(X_t)}{h_\theta(x)}\right].
\label{doagain}
\end{eqnarray*}
Recalling the martingale property from (\ref{itisamartingale}) together with the above inequality, but now applied to the event $A^c$, tells us that 
\begin{eqnarray*}
\lefteqn{
{\color{black}\limsup_{a\to \infty}}\mathbb{P}_x(A{\color{black}\cap\{t<\tau^{(-a,a)^{\rm c}}\}}\,|\,\tau^{(-a,a)^{\rm c}}<\tau^{\{0\}})
}\\
&&{\color{black}\leq } 1- {\color{black}\liminf_{a\to \infty}}\mathbb{P}_x(A^c{\color{black}\cap\{t<\tau^{(-a,a)^{\rm c}}\}}\,|\,\tau^{(-a,a)^{\rm c}}<\tau^{\{0\}})\\
&&{\color{black}\leq} \mathbb{E}_x\left[\frac{h_\theta(X_t)}{h_\theta(x)}\mathbf{1}_{( t< \tau^{\{0\}})}\right] - \mathbb{E}_x\left[\frac{h_\theta(X_t)}{h_\theta(x)}\mathbf{1}_{(A^c, \, t< \tau^{\{0\}})}\right]\\
&&=\mathbb{E}_x\left[\frac{h_\theta(X_t)}{h_\theta(x)}\mathbf{1}_{(A, \, t< \tau^{\{0\}})}\right]
\end{eqnarray*}
and the required limiting identity follows.
%
} 

{\color{black}The proof of (b) is similar to that of (a) except that in this case (\ref{itisamartingale}) ensures that $X^{\theta}_{t}$ is a super-martingale only and hence the final part of the argument above does not extend to this setting. To overcome this difficulty we proceed as follows. Notice $\tau^{(-a,a)}\to\tau^{\{0\}}$ as $a\to 0.$ As before for $A\in \mathcal{F}_t$, we have
\begin{eqnarray*}
\lefteqn{
{\color{black}\liminf_{a\to 0}}\mathbb{P}_x(A{\color{black}\cap\{t<\tau^{(-a,a)}\}}\,|\,\tau^{(-a,a)}<\infty)
}\notag\\
&& {\color{black}=\liminf_{a\to 0}}\mathbb{E}_{x}\left[\mathbf{1}_{(A, \, t<\tau^{(-a,a)})}\frac{\mathbb{P}_{X_{t}}(\tau^{(-a,a)}<\infty)}{\mathbb{P}_{x}(\tau^{(-a,a)}<\infty)}\right]\notag\\
&&\geq \mathbb{E}_{x}\left[\mathbf{1}_{(A, \, t<\tau^{\{0\}})} {\color{black}\liminf_{a\to 0}}\frac{\mathbb{P}_{a^{-1}X_{t}}(\tau^{(-1,1)}<\infty)}{\mathbb{P}_{a^{-1}x}(\tau^{(-1,1)}<\infty)}\right]\notag\\
&&=\mathbb{E}_x\left[\mathbf{1}_{(A, \, t< \tau^{\{0\}})}\frac{h_\theta(X_t)}{h_\theta(x)}\right].
\label{doagainagain}
\end{eqnarray*}
Now, the second half of the argument in (a) extends to this setting if the following equation holds true 
\begin{equation*}
\lim_{a\to 0}\mathbb{P}_x(t<\tau^{(-a,a)}\,|\,\tau^{(-a,a)}<\infty)=\mathbb{E}_x\left[\frac{h_\theta(X_t)}{h_\theta(x)}\mathbf{1}_{(t< \tau^{\{0\}})}\right].
\end{equation*}
On the one hand, the Markov property Fatou's lemma and the estimate (\ref{SV2-andreas}) imply that
\begin{equation*}
\begin{split}
\liminf_{a\to 0}\mathbb{P}_x(t<\tau^{(-a,a)}\,|\,\tau^{(-a,a)}<\infty)&=
\liminf_{a\to 0}\mathbb{P}_x\left(\mathbf{1}_{(t<\tau^{(-a,a)})}\frac{\mathbb{P}_{X_{t}}(\tau^{(-a,a)}<\infty)}{\mathbb{P}_{x}(\tau^{(-a,a)}<\infty)}\right)\\ 
&\geq \mathbb{E}_x\left[\frac{h_\theta(X_t)}{h_\theta(x)}\mathbf{1}_{(t< \tau^{\{0\}})}\right].
\end{split}
\end{equation*}
Now, the estimate in (b) in Proposition (\ref{regvar}) 
implies that for $y\neq 0$ 
$$\lim_{a\to 0}\left(\frac{a}{|y|}\right)^{\theta}\mathbb{P}_{y}(\tau^{(-a,a)}<\infty)=\lim_{a\to 0}\left(\frac{a}{|y|}\right)^{\theta}\mathbb{P}_{\sgn(y)}(\tau^{(-\frac{a}{|y|},\frac{a}{|y|})}<\infty)=v_{{\rm sign}(y)}(\theta)\tilde{C}_\theta,$$ and the convergence holds uniformly in $a/|y|$ such that ${a}/{|y|}<\epsilon,$ for $\epsilon>0.$ Moreover, for ${a}/{|y|}>\epsilon$ the term
$({a}/{|y|})^{\theta}\mathbb{P}_{y}(\tau^{(-a,a)}<\infty)$
remains bounded. Thus for $x\neq 0, \epsilon>0,$ fixed we have
\begin{equation*}
\begin{split}
&\limsup_{a\to 0}\mathbb{P}_x\left(\mathbf{1}_{(t<\tau^{(-a,a)})}\frac{\mathbb{P}_{X_{t}}(\tau^{(-a,a)}<\infty)}{\mathbb{P}_{x}(\tau^{(-a,a)}<\infty)}\right)\\
&=\limsup_{a\to 0}\mathbb{P}_x\left(\mathbf{1}_{((a/|X_{t}|)<\epsilon,\ t<\tau^{(-a,a)})}\frac{\mathbb{P}_{X_{t}}(\tau^{(-a,a)}<\infty)}{\mathbb{P}_{x}(\tau^{(-a,a)}<\infty)}\right)\\ 
&\quad+\limsup_{a\to 0}\mathbb{P}_x\left(\mathbf{1}_{((a/|X_{t}|)\geq \epsilon,\ t<\tau^{(-a,a)})}\frac{\mathbb{P}_{X_{t}}(\tau^{(-a,a)}<\infty)}{\mathbb{P}_{x}(\tau^{(-a,a)}<\infty)}\right)\\
&= \mathbb{E}_x\left[\frac{h_\theta(X_t)}{h_\theta(x)}\mathbf{1}_{(t< \tau^{\{0\}})}\right]+\limsup_{a\to 0}\mathbb{P}_x\left(\mathbf{1}_{((a/|X_{t}|)\geq \epsilon,\ t<\tau^{(-a,a)})}\frac{\mathbb{P}_{X_{t}}(\tau^{(-a,a)}<\infty)}{\mathbb{P}_{x}(\tau^{(-a,a)}<\infty)}\right).
\end{split}
\end{equation*}
Finally the limsup in the above estimate is equal to zero because it can be bounded by above as follows
\begin{equation*}
\begin{split}
&\mathbb{P}_x\left(\mathbf{1}_{((a/|X_{t}|)\geq \epsilon,\ t<\tau^{(-a,a)})}\frac{\mathbb{P}_{X_{t}}(\tau^{(-a,a)}<\infty)}{\mathbb{P}_{x}(\tau^{(-a,a)}<\infty)}\right)\\
&\leq \frac{1}{a^{\theta}\mathbb{P}_{x}(\tau^{(-a,a)}<\infty)}\mathbb{P}_x\left(\mathbf{1}_{((a/|X_{t}|)\geq \epsilon,\ t<\tau^{(-a,a)})}|X_{t}|^{\theta}\sup_{|z|\geq \epsilon}|z|^{\theta}\mathbb{P}_{\sgn(z)}(\tau^{(-z,z)}<\infty)\right)\\
&=\frac{x^{\theta}\sup_{|z|\geq \epsilon}|z|^{\theta}\mathbb{P}_{\sgn(z)}(\tau^{(-z,z)}<\infty)}{a^{\theta}\mathbb{P}_{x}(\tau^{(-a,a)}<\infty)}\mathbb{P}^{\circ}_x\left(\mathbf{1}_{(a/|X_{t}|\geq \epsilon,\ t<\tau^{(-a,a)})}\frac{v_{{\rm sign}(x)}(\theta)}{v_{{\rm sign}(X_{t})}(\theta)}\right),
\end{split}
\end{equation*}
and by the monotone convergence theorem the rightmost term in the above inequality tends to zero when $a\to 0.$
}
\end{proof}

\section{Integrated exponential MAPs and the proof of Theorem \ref{two}}

In order to approach the asymptotic conditioning in Theorem \ref{two}, we need to understand the tail behaviour of the probabilities $\mathbb{P}_x(\tau^{\{0\}}>t)$, as $t\to\infty$, for all $x\neq0$. Indeed, following arguments in the spirit of the proof of Theorem \ref{one}, for $A\in \mathcal{F}_t$, the Markov property tells us that, for $x\in\mathbb{R}\backslash\{0\}$,
\begin{equation}
\lim_{s\to \infty}\mathbb{P}_x(A \,|  \tau^{\{0\} } >t+s)=\lim_{s\to\infty}\mathbb{E}_x\left[\mathbf{1}(A, \, t<\tau^{\{0\}})\frac{\mathbb{P}_{X_t}(\tau^{\{0\}}>s)}{\mathbb{P}_x(\tau^{\{0\}}>t+s)}\right].
\label{conditionedtime}
\end{equation}
We are thus compelled to consider  the asymptotic behaviour of 
$ \mathbb{P}_x(\tau^{\{0\}}>t)$ as  $t\to\infty.$
In particular, we will prove the following {\color{black} result, which is also of intrinsic interest.}

\begin{theorem}\label{tailtheorem} Define 
\[
I = \int_0^\infty {\rm e}^{\alpha\xi(s)}\dd s.
\]
For $\theta>0$, we have that $\mathbf{E}_{0,k}[I^{\theta/\alpha-1}]<\infty$, $k=\pm1$, and 
\[
 \mathbb{P}_x(\tau^{\{0\}}>t) \sim v_{{\rm sign}(x)}(\theta) |x|^{\theta}t^{-\theta/\alpha}\sum_{j=\pm1}\pi^\theta_{j}\frac{\mathbf{E}_{0,j}[I^{\theta/\alpha-1}]}{\mu_{\theta}|\alpha-\theta|v_j(\theta)}, \text{ as }t\to\infty,
\]
where $\mu_{\theta}=\sum_{j = \pm1}\pi^\theta_j\mathbf{E}_{0,j}^{\theta}[\xi(1)]$ and $\boldsymbol{\pi}^\theta=(\pi^\theta_{1}, \pi^\theta_{-1})$ is the stationary distribution of $J$ under $\mathbf{P}^\theta_{x,i}$, $x\in\mathbb{R}$, $i=\pm1$. 
\end{theorem}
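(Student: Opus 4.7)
My plan is to reduce the problem to the Cram\'er-type tail behaviour of the exponential functional $I$, and then to apply Markov additive renewal theory as developed in Section~\ref{Cramer}. Under $\mathbb{P}_x$, the Lamperti--Kiu representation gives $\tau^{\{0\}}=\int_0^{\infty}e^{\alpha\xi(s)}\,\dd s$ with $\xi(0)=\log|x|$. Factoring out the scale $|x|^{\alpha}$ by translating the starting point of $\xi$ in its spatial coordinate yields $\mathbb{P}_x(\tau^{\{0\}}>t)=\mathbf{P}_{0,\sgn(x)}(I>t/|x|^{\alpha})$, so it suffices to establish
\[
\mathbf{P}_{0,i}(I>t)\sim v_i(\theta)\,t^{-\theta/\alpha}\,\sum_{j=\pm1}\frac{\pi^{\theta}_j\,\mathbf{E}_{0,j}[I^{\theta/\alpha-1}]}{\mu_\theta|\alpha-\theta|v_j(\theta)},\qquad t\to\infty,
\]
for each $i=\pm1$.

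\noindent\textbf{Moment finiteness and first-passage decomposition.} Under $\mathbf{P}^{\theta}$, the MAP $(\xi,J)$ drifts to $+\infty$. Adapting Rivero's Mellin-transform iteration for L\'evy exponential functionals to the MAP setting yields $\mathbf{E}_{0,k}[I^{q}]<\infty$ for every $0\le q<\theta/\alpha$; the negative-moment case $q=\theta/\alpha-1<0$ (when $\theta<\alpha$) is handled via the non-lattice hypothesis of (A) together with standard density estimates for $I$ near the origin. For the tail itself, fix $y>0$ and use the strong Markov property of $(\xi,J)$ at the first passage time $T^+_y$ of $\xi$ above $y$ to write, on $\{T^+_y<\infty\}$,
\[
I=A_y+e^{\alpha\xi(T^+_y)}I',\qquad A_y:=\int_0^{T^+_y}e^{\alpha\xi(s)}\,\dd s,
\]
with $I'$ conditionally independent of $\mathcal{F}_{T^+_y}$ given $J(T^+_y)$ and distributed as $I$ under $\mathbf{P}_{0,J(T^+_y)}$. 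Choosing $y=(\log t)/\alpha$, the term $A_y$ is asymptotically negligible relative to $e^{\alpha\xi(T^+_y)}I'$, and so
\[
\mathbf{P}_{0,i}(I>t)\sim\mathbf{E}_{0,i}\bigl[\mathbf{P}_{0,J(T^+_y)}(I>e^{-\alpha O_y})\,\mathbf{1}_{\{T^+_y<\infty\}}\bigr],\qquad O_y:=\xi(T^+_y)-y.
\]

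\noindent\textbf{Markov additive renewal and identification of the constant.} Applying the Cram\'er change of measure of Proposition~\ref{p:mg and com} to the right-hand side above rewrites it as
\[
v_i(\theta)\,t^{-\theta/\alpha}\,\mathbf{E}^{\theta}_{0,i}\!\left[\frac{e^{-\theta O_y}}{v_{J(T^+_y)}(\theta)}\,\mathbf{P}_{0,J(T^+_y)}(I>e^{-\alpha O_y})\right].
\]
Under $\mathbf{P}^{\theta}$, the pair $(J(T^+_y),O_y)$ converges as $y\to\infty$ to the stationary overshoot law of the ascending ladder MAP from Section~\ref{Cramer}: $J(T^+_\infty)$ has law $\boldsymbol{\pi}^{\theta}$ and its conditional overshoot density carries the normalising factor $\mu_\theta^{-1}$. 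Passing to the limit (justified by dominated convergence, using the $I^{\theta/\alpha-1}$-moment bound of the previous step as majorant) and simplifying the resulting double integral via the substitution $u=e^{-\alpha s}$ collapses it into $\mathbf{E}_{0,j}[I^{\theta/\alpha-1}]/|\alpha-\theta|$, producing precisely the announced constant; the factor $|\alpha-\theta|^{-1}$ is the Jacobian of this substitution combined with an integration by parts.

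\noindent\textbf{Main obstacle.} The delicate point is the simultaneous control of the overshoot $O_y$, the embedded chain value $J(T^+_y)$, and the conditional tail $\mathbf{P}_{0,J(T^+_y)}(I>e^{-\alpha O_y})$ as $y\to\infty$, while verifying that the contribution on $\{T^+_y=\infty\}$ to $\{I>t\}$ is genuinely negligible. Justifying the interchange of limit and expectation hinges on uniform integrability, for which the finiteness of $\mathbf{E}_{0,k}[I^{\theta/\alpha-1}]$ established above is the natural (and essentially sharp) dominator, and this is exactly why the condition $\chi'(\theta)\in\mathbb{R}$ in assumption (A) is needed: it ensures $\mu_\theta<\infty$ and hence a non-degenerate stationary overshoot limit.
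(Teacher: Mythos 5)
Your route is genuinely different from the paper's. Where you decompose $I=A_y+e^{\alpha\xi(T^+_y)}I'$ at the first passage time $T^+_y$, pass to the tilted law $\mathbf{P}^\theta$, and let the stationary overshoot law of the ascending ladder MAP $(H^+,J^+)$ deliver the constant, the paper instead proves (via Laplace transforms) the convolution identity
$\mathbf{P}_{0,i}(I>t)\,\dd t=\sum_j\int_{\mathbb{R}}V_{i,j}(\dd y)\,e^{\alpha y}\,\mathbf{P}_{0,j}(e^{\alpha y}I\in\dd t)$
in terms of the potential measure $V_{i,j}$ of the \emph{full} MAP $\xi$, tilts $V_{i,j}$ by $e^{\theta y}$, embeds a discrete-time Markov additive renewal process via exponential sampling, and then invokes the Markov Additive Renewal Theorem (handling $\theta<\alpha$, $\theta>\alpha$, $\theta=\alpha$ separately to secure direct Riemann integrability). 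The paper's route avoids the ladder structure entirely, which is exactly where your argument develops a gap.

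The concrete gap is in the identification of the constant. Under $\mathbf{P}^\theta$ the stationary overshoot density of $\xi$ over high levels is normalised by the stationary mean of the \emph{ladder height} $H^+$, i.e.\ by $\mu^+_\theta=\mathbf{E}^\theta_{0,\boldsymbol{\pi}^\theta}[H^+(\Theta_1)]$ (the quantity that appears in Lemma~\ref{cramerestimate}), not by $\mu_\theta=\sum_j\pi^\theta_j\mathbf{E}^\theta_{0,j}[\xi(1)]$. Your claim that the overshoot limit ``carries the normalising factor $\mu_\theta^{-1}$'' conflates these two quantities. They are related only through a Wiener--Hopf/local-time-normalisation identity for MAPs, which you would have to state and prove to reconcile your constant with the one in the theorem; as written, your approach produces $\mu^+_\theta$ where the statement has $\mu_\theta$. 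The paper sidesteps this by working directly with $V_{i,j}$, so the renewal theorem's normalising mean is automatically the drift $\mu_\theta$ of $\xi$ itself. Two further points are weaker in your write-up: (i) the assertion that $A_y$ and the contribution from $\{T^+_y=\infty\}$ are negligible at the scale $t^{-\theta/\alpha}$ is flagged but not established, and it is not automatic since $A_y$ can be of order $t$ on long excursions below $y$; (ii) the finiteness of $\mathbf{E}_{0,k}[I^{\theta/\alpha-1}]$ for $\theta<\alpha$ is the delicate negative-moment case and cannot be dispatched by ``standard density estimates''—the paper uses Proposition~3.6 of \cite{KKPW} for the base case $\theta/\alpha\in(0,1]$ and then a recursion via the identity \eqref{takeLT} to push to higher $\theta/\alpha$, and something equally explicit is required here.
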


The Lamperti--Kiu representation $(\xi,J)$ allows us to write
\begin{equation}
\tau^{\{0\}}=|x|^{\alpha}\int_0^{\infty}{\rm e}^{\alpha\xi(s)}{\rm d}s:=|x|^{\alpha}I.
\label{T0andI}
\end{equation}
Note in particular that Cram\'er's condition and the assumption $\theta>0$ ensures that $\xi'(0)<0$. In turn, this implies  that $\xi(t)\to-\infty$ almost surely at a linear rate which guarantees the almost sure finiteness of $I$.
It follows that $\mathbb{P}_x(\tau^{\{0\}}>t) = \mathbf{P}_{0, {\rm sign}(x)}(I>|x|^{-\alpha} t)$ and hence, to prove the above theorem, we need to first pass through some technical results concerning the tail of the distribution of $I$.

The asymptotic behaviour of the tail distribution of objects similar to $I$, when the process $\xi$ is replaced by a L\'evy process, has been considered in \cite{R, AR}. We will borrow some of the ideas from the second of these two papers and apply them  in he Markov additive setting in proving \ref{tailtheorem}. 
To this end, let us introduce the potential measure 
$$ V_{i,j}({\rm d}x)=\int_{0}^{\infty}\mathbf{P}_{0,i}(\xi(s) \in {\rm d}x,J(s) =j){\rm d}s.$$

\begin{proposition} For $t>0$ and $i=\pm1$, 
\begin{equation}
\mathbf{P}_{0,i}(I>t){\rm d}t=\sum_{j=\pm1} \int_{\mathbb{R}} V_{i,j}({\rm d}y){\rm e}^{\alpha y}\mathbf{P}_{0,j}({\rm e}^{\alpha y}I \in {\rm d}t).
\label{takeLT}
\end{equation}
\end{proposition}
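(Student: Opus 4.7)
The plan is to test both sides against a non-negative measurable function $g:(0,\infty)\to[0,\infty)$ and verify equality of the resulting scalars; since both sides of the claimed identity are Borel measures on $(0,\infty)$, this suffices. First, by Fubini applied to the LHS,
\[
\int_0^\infty g(t)\,\mathbf{P}_{0,i}(I>t)\,\mathrm{d}t = \mathbf{E}_{0,i}\!\left[\int_0^{I} g(t)\,\mathrm{d}t\right] = \mathbf{E}_{0,i}[G(I)],
\]
where $G(x)=\int_0^x g(t)\,\mathrm{d}t$.

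The key trick is a change of variables inside the integral defining $G(I)$. Since $I=\int_0^\infty e^{\alpha\xi(u)}\,\mathrm{d}u$ is almost surely finite (assumption (A) with $\theta>0$ gives $\xi(t)\to-\infty$ linearly), substitute $t=\int_s^\infty e^{\alpha\xi(u)}\,\mathrm{d}u$, so that $\mathrm{d}t=-e^{\alpha\xi(s)}\,\mathrm{d}s$ and $s=0\leftrightarrow t=I$, $s=\infty\leftrightarrow t=0$. This gives
\[
G(I) = \int_0^\infty g\!\left(\int_s^\infty e^{\alpha\xi(u)}\,\mathrm{d}u\right) e^{\alpha\xi(s)}\,\mathrm{d}s.
\]

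Next, I would apply the MAP property at each deterministic time $s$. Writing $\int_s^\infty e^{\alpha\xi(u)}\,\mathrm{d}u = e^{\alpha\xi(s)} I'_s$, where $I'_s:=\int_0^\infty e^{\alpha(\xi(s+u)-\xi(s))}\mathrm{d}u$, the defining MAP property \eqref{e:MAP} guarantees that, conditionally on $J(s)=j$ and $\xi(s)=y$, the random variable $I'_s$ has the law of $I$ under $\mathbf{P}_{0,j}$ and is independent of $\mathcal{G}_s$. Taking $\mathbf{E}_{0,i}$ of the expression above, conditioning on $(\xi(s),J(s))$, and applying Fubini (justified by non-negativity) yields
\[
\mathbf{E}_{0,i}[G(I)] = \int_0^\infty \sum_{j=\pm 1}\int_{\mathbb{R}} \mathbf{P}_{0,i}(\xi(s)\in\mathrm{d}y,\, J(s)=j)\,e^{\alpha y}\,\mathbf{E}_{0,j}\!\left[g(e^{\alpha y}I)\right]\mathrm{d}s.
\]
Interchanging the $\mathrm{d}s$ integration with the sum/spatial integration gathers the $\mathrm{d}s$ into the potential $V_{i,j}(\mathrm{d}y)$, leaving
\[
\mathbf{E}_{0,i}[G(I)] = \sum_{j=\pm 1}\int_{\mathbb{R}} V_{i,j}(\mathrm{d}y)\,e^{\alpha y}\int_0^\infty g(t)\,\mathbf{P}_{0,j}(e^{\alpha y}I\in\mathrm{d}t),
\]
which is precisely the RHS paired against $g$. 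Since $g$ was arbitrary, the measure identity follows.

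There is no serious obstacle here: the only non-trivial ingredient is the MAP Markov property at deterministic times, and all Fubini applications are justified by positivity. The substitution is well defined pathwise because $s\mapsto \int_s^\infty e^{\alpha\xi(u)}\,\mathrm{d}u$ is continuous and strictly decreasing on paths where $I<\infty$, i.e.\ almost surely.
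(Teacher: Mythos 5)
Your proof is correct and follows essentially the same route as the paper's: the key decomposition is the pathwise change of variables $t = \int_s^\infty e^{\alpha\xi(u)}\,\mathrm{d}u$ (the paper encodes this as $1-e^{-\lambda I} = \int_0^\infty \mathrm{d}\bigl(e^{-\lambda\int_s^\infty e^{\alpha\xi(u)}\mathrm{d}u}\bigr)$), followed by the MAP Markov property at deterministic time $s$ and Tonelli to assemble the potential $V_{i,j}$. The only difference is cosmetic: the paper tests both sides against exponentials $g(t)=\lambda e^{-\lambda t}$ and invokes uniqueness of Laplace transforms, whereas you test against arbitrary non-negative measurable $g$, which directly yields the measure identity without the uniqueness step.
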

\begin{proof}\label{eq:Ilap} The method of proof is to show the left- and right-hand sides of (\ref{takeLT}) are equal by considering their Laplace transforms.
Integration by parts shows us that, for $\lambda>0$, we have on the one hand,
\begin{equation}
\mathbf{E}_{0,i}(1-{\rm e}^{-\lambda I})=\lambda \int_0^{\infty}{\rm e}^{-\lambda t}\mathbf{P}_{0,i}(I>t){\rm d}t .\label{comparelater}
\end{equation}
We shall use the above equation for comparison later. On the other hand, we have for $\lambda>0$, 
\begin{eqnarray}
\mathbf{E}_{0,i}(1-{\rm e}^{-\lambda I})&=&\mathbf{E}_{0,i}\left[\int_0^{\infty}{\rm d}({\rm e}^{-\lambda\int_s^{\infty}{\rm e}^{\alpha\xi(u)}{\rm d}u})\right]\notag\\
&=&\lambda\mathbf{E}_{0,i}\left[\int_0^{\infty}{\rm e}^{\alpha \xi(s)}{\rm e}^{-\lambda\int_s^{\infty}{\rm e}^{\alpha\xi(u)}{\rm d}u}{\rm d}s\right]\notag\\
&=& \lambda \int_0^{\infty}\sum_{j=\pm1} \mathbf{E}_{0,i}\left[{\rm e}^{\alpha \xi(s)}{\rm e}^{-\lambda {\rm e}^{\alpha \xi(s)}\int_s^{\infty}{\rm e}^{\alpha(\xi(u)-\xi(s))}{\rm d}u};J(s)=j\right]{\rm d}s\notag\\
 &=& \lambda \sum_{j=\pm1}\int_0^{\infty} \mathbf{E}_{0,i}\left[{\rm e}^{\alpha \xi(s)}\mathbf{E}_{0,j} \left.\left[{\rm e}^{-\lambda {\rm e}^{\alpha y} I}\right]\right|_{y= \xi(s)}\right]{\rm d}s\notag\\
&=&\lambda\sum_{j=\pm1} \int_{\mathbb{R}}V_{i,j}({\rm d}y){\rm e}^{\alpha y} \mathbf{E}_{0,j}[{\rm e}^{-\lambda {\rm e}^{\alpha y} I}]\notag\\
&=&\lambda\sum_{j=\pm1} \int_{\mathbb{R}}V_{i,j}({\rm d}y){\rm e}^{\alpha y} \int_0^{\infty}\mathbf{P}_{0,j}({\rm e}^{\alpha y}I\in {\rm d}t) {\rm e}^{-\lambda t},\label{secondversion}
\end{eqnarray}
where we have applied the conditional stationary independent increments of $(\xi, J)$ in the fourth equality.
Now comparing (\ref{secondversion}) with (\ref{comparelater}), we see that
$$ \mathbf{P}_{0,i}(I>t){\rm d}t=\sum_{j=\pm1} \int_{\mathbb{R}} V_{i,j}({\rm d}y){\rm e}^{\alpha y}\mathbf{P}_{0,j}({\rm e}^{\alpha y}I \in {\rm d}t),$$
for $t>0$, as required.
\end{proof}

Now that we have expressed the tail probabilities $\mathbf{P}_{0,i}(I>t)$ in terms of the potential measure $V_{i,j}$, we may again turn to renewal theory for Markov additive random walks in order to extract the desired asymptotics as $t\to\infty$. With a view to applying Theorem \ref{prop} in the Appendix, let us therefore introduce $(M_n,\Delta_n)$ defined as 
$$ M_n=J({\Theta_n}) \text{ and } \Delta_n=\xi_{\Theta_n}, \qquad n\geq 0,$$ where, as before, $\Theta_0=0$ and $\Theta_n$ is the sum of an independent sequence of exponential random variables with unit mean. As in the Appendix, we write $R_{i,j}({\rm d}x)$ for the renewal measure of $(\Xi, M)$, where $\Xi_0 = 0$, $\Xi_n = \Delta_1+\cdots\Delta_n$, $n\geq 1$. We also introduce 
\[
R^\theta_{i,j}({\rm d}x):=\frac{v_j(\theta)}{v_i(\theta)}{\rm e}^{\theta x}R_{i,j}({\rm d}x), \qquad x\in\mathbb{R}.
\]
We note again that $V_{i,j}({\rm d}x) = R_{i,j}({\rm d}x) - \delta_0({\rm d}x)\mathbf{1}_{(i=j)}$.

In a similar spirit to (\ref{similarspirit}), we may use these Markov additive random walks to write for any interval $A \subseteq[0,\infty)$
\begin{eqnarray}
{\rm e}^{(\theta-\alpha)t}\int_{A{\rm e}^{\alpha t}}\mathbf{P}_{0,i}(I>s){\rm d}s&=&\sum_{j=\pm1} \int_{\mathbb{R}} V_{i,j}({\rm d}y){\rm e}^{\alpha y}{\rm e}^{(\theta-\alpha)t}\int_{A{\rm e}^{\alpha t}}\mathbf{P}_{0,j}({\rm e}^{\alpha y}I \in {\rm d}s)\notag\\
&=&v_i(\theta)\sum_{j=\pm1}  \frac{1}{v_j(\theta)}\int_{\mathbb{R}}R^{\theta}_{i,j}({\rm d}y){\rm e}^{(\theta-\alpha)(t-y)}\int_{A{\rm e}^{\alpha t}}\mathbf{P}_{0,j}({\rm e}^{\alpha y}I \in {\rm d}s)\notag\\
&&+\mathbf{1}_{(i=j)}{\rm e}^{(\theta-\alpha)t}\mathbf{P}_{0,j}(I \in A{\rm e}^{\alpha t})\notag\\
&=&v_i(\theta)\sum_{j=\pm1}  \frac{1}{v_j(\theta)}\int_{\mathbb{R}}R^{\theta}_{i,j}({\rm d}y){\rm e}^{(\theta-\alpha)(t-y)}\mathbf{P}_{0,j}(I \in A{\rm e}^{\alpha(t-y)})\notag\\
&&-\mathbf{1}_{(i=j)}{\rm e}^{(\theta-\alpha)t}\mathbf{P}_{0,j}(I \in A{\rm e}^{\alpha t}).\label{pre-MART}
\end{eqnarray}
Noting that the main term on the right-hand side above is a convolution between the renewal measure $R^\theta_{i,j}$ and  the function
\[
g_j(z, A) : = \frac{1}{v_j(\theta)} {\rm e}^{(\theta-\alpha)z}\mathbf{P}_{0,j}(I \in A{\rm e}^{\alpha t}), \qquad z\in\mathbb{R},
\] we are now  almost ready to apply the discrete-ime Markov Additive Renewal Theorem \ref{prop} in the Appendix. It turns out that we need to choose the interval $A$ judiciously according to whether $\theta$ is bigger or smaller than $\alpha$ in order to respect the directly Riemann integrability condition in the renewal theorem.  We therefore digress with an additional technical lemma before returning to the limit in (\ref{pre-MART}) and the proof of Theorem \ref{tailtheorem}.

\begin{lemma}\label{Ifinite} When  $\theta>0$,  $\mathbf{E}_{0,j}\left(I^{\theta/\alpha-1}\right)<\infty$, for all $j=\pm1$.
\end{lemma}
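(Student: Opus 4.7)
The plan is to split the argument into three cases according to the sign of $p := \theta/\alpha - 1$. The case $p = 0$ (that is, $\theta = \alpha$) is trivial since $\mathbf{E}_{0,j}[I^0] = 1$, so assume $p \neq 0$.

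When $p > 0$ (equivalently $\theta > \alpha$), the plan is to use subadditivity together with the Perron--Frobenius asymptotics of $e^{\boldsymbol F(p\alpha) t}$. First I would write
\[
I = \sum_{n \geq 0} \int_n^{n+1} e^{\alpha \xi(s)}\, ds \leq \sum_{n \geq 0} e^{\alpha \Xi_n}, \qquad \Xi_n := \sup_{s \in [n, n+1]} \xi(s),
\]
and then pass to $L^p$-norms via Minkowski's inequality when $p \geq 1$, or via the elementary sub-additivity $(a+b)^p \leq a^p + b^p$ when $p \in (0,1]$. In either form, the task reduces to summability in $n$ of (a suitable power of) $\mathbf{E}_{0,j}[e^{p\alpha \Xi_n}]$. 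Applying the Markov property at time $n$ to factorise this moment gives
\[
\mathbf{E}_{0,j}[e^{p\alpha \Xi_n}] = \sum_{k=\pm 1} \mathbf{E}_{0,j}\bigl[e^{p\alpha \xi(n)};\, J(n) = k\bigr]\, \mathbf{E}_{0,k}\bigl[e^{p\alpha \sup_{s \in [0,1]} \xi(s)}\bigr].
\]
The inner factor is finite uniformly in $k$ by Doob's $L^q$-maximal inequality applied to the Wald martingale $e^{p\alpha \xi(s) - \chi(p\alpha) s} v_{J(s)}(p\alpha) / v_{J(0)}(p\alpha)$, choosing $q > 1$ small enough that $qp\alpha$ still lies in the domain of $\boldsymbol F$ (possible since $p\alpha = \theta - \alpha < \theta$). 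The outer factor equals $[e^{\boldsymbol F(p\alpha) n}]_{j,k}$, which decays geometrically at rate $e^{\chi(p\alpha) n}$ by Perron--Frobenius. The key sign observation is that $p\alpha \in (0, \theta)$ combined with convexity of $\chi$ and $\chi(0) = \chi(\theta) = 0$ force $\chi(p\alpha) < 0$, so the resulting geometric series converges.

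When $p \in (-1, 0)$ (equivalently $0 < \theta < \alpha$), the plan is instead to bound $I$ from below by a simple stopping-time quantity. Let $\tau := \inf\{s \geq 0 : |\xi(s)| > 1\}$. Since $|\xi(s)| \leq 1$ on $[0, \tau)$, one has $e^{\alpha \xi(s)} \geq e^{-\alpha}$ there, whence $I \geq e^{-\alpha} \tau$ and $I^p \leq e^{\alpha|p|}\, \tau^{-|p|}$ with $|p| = 1 - \theta/\alpha \in (0, 1)$. The problem is then reduced to verifying $\mathbf{E}_{0,j}[\tau^{-|p|}] < \infty$, which via the identity $\mathbf{E}[\tau^{-r}] = r \int_0^\infty v^{-r-1}\, \mathbf{P}(\tau \leq v)\, dv$ comes down to the linear-at-zero estimate $\mathbf{P}_{0,j}(\tau \leq t) = O(t)$ as $t \downarrow 0$.

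The main technical obstacle will be establishing this last estimate, and the plan for it is the standard decomposition into ``big'' and ``small'' jump contributions: the event $\{\tau \leq t\}$ is contained in the union of the event that at least one jump of $(\xi, J)$ of absolute size exceeding $1$ (either a L\'evy-measure jump of one of the legs or a state-transition jump $\Delta_{j,k}$) occurs in $[0, t]$---which has probability $O(t)$ by the finite total arrival rate of such jumps---and the event that the residual continuous-plus-small-jump part of $\xi$ alone crosses level $1$ before time $t$, whose probability is $O(\exp(-c/t))$ by standard concentration estimates for L\'evy processes with bounded jumps. Combining the two bounds delivers $\mathbf{P}_{0,j}(\tau \leq t) = O(t)$, and the rest of the proof is a direct consequence of the exponential structure supplied by assumption (A).
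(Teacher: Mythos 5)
Your proposal is correct in its overall strategy but follows a genuinely different route from the paper's, which is shorter and leans on two external ingredients. For $0<\theta\le\alpha$ the paper simply invokes Proposition 3.6 of \cite{KKPW}, which yields $\mathbf{E}_{0,i}[I^{\beta-1}]<\infty$ whenever $0<\alpha\beta\le\theta$, and for $\theta>\alpha$ it iterates the recursion $\mathbf{E}_{0,k}[I^{\beta}]=\beta\sum_{j}\mathbf{E}_{0,j}[I^{\beta-1}]\bigl(\boldsymbol{F}(\alpha\beta)^{-1}\bigr)_{k,j}$, a Fubini consequence of \eqref{takeLT}, stepping down to the base case supplied by \cite{KKPW}. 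Your plan avoids both the citation and the recursion. For $\theta>\alpha$, the subadditivity (or Minkowski) reduction to geometric summability of $\mathbf{E}_{0,j}[\mathrm{e}^{p\alpha\Xi_n}]$, the Markov factorisation at integer times, Doob's $L^{q}$ maximal inequality applied to the Wald martingale, and the Perron--Frobenius decay governed by $\chi(p\alpha)<0$ all hold together, provided $q$ is chosen close enough to $1$ that $qp\alpha<\theta$ remains in the domain of $\boldsymbol{F}$. For $0<\theta<\alpha$, bounding $I$ below by $\mathrm{e}^{-\alpha}\tau$ and reducing to a linear-in-$t$ small-time estimate for $\mathbf{P}_{0,j}(\tau\le t)$ is also sound, but one detail needs tightening: the claimed rate $O(\exp(-c/t))$ for the residual small-jump part is only correct when the residual jumps are bounded strictly away from the threshold. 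If you cut big jumps at size $1$, a single residual jump near $1$ can already exit, and one only gets $O(t)$ there; cutting instead at, say, $1/2$, so that the residual part needs at least two jumps (giving $O(t^{2})$), repairs this, and either way the required overall estimate $\mathbf{P}_{0,j}(\tau\le t)=O(t)$ and hence $\mathbf{E}_{0,j}[\tau^{-|p|}]<\infty$ do follow. In summary: the paper trades self-containment for brevity by reusing \eqref{takeLT} and an external moment bound; your route is self-contained at the cost of more technical overhead and needs the small repair above in the sub-critical case.
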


\begin{proof}  When $\theta =\alpha$ the result is trivial. 
The case that  $\theta/\alpha<1$ turns out to be a direct consequence of Proposition 3.6 from \cite{KKPW}.  To be more precise, careful inspection of the proof there shows that (in our terminology) if $0<\alpha\beta\leq \theta$ then $\mathbf{E}_{0,i}[I^{\beta-1}]<\infty$, in which case one takes $\beta = \theta/\alpha$.

    For the final case that $\theta/\alpha>1$, we can replicate the recurrence relation from Section 1.2 of \cite{AR}.  Appealing to (\ref{takeLT}), we have, for $\beta\in (0,\theta/\alpha)$ and $k=\pm1$, 
    \[
     \mathbf{E}_{0,k}[I^{\beta}]= \beta \int_0^{\infty} s^{\beta -1} \mathbf{P}_{0,k}(I>s){\rm d}s= \beta \int_0^{\infty}  s^{\beta -1} \sum_j \int_{\mathbb{R}} V_{k,j}({\rm d}y){\rm e}^{\alpha y}\mathbf{P}_{0,j}({\rm e}^{\alpha y}I \in {\rm d}s).
     \]
Let us momentarily assume that $\mathbb{E}_k[I^{\beta-1}]<\infty$ for $k= \pm1$. We can use Fubini's theorem and put $s=t {\rm e}^{\alpha y}$, and get 
\begin{eqnarray*}
\mathbf{E}_{0,k}[I^{\beta}]&=& \beta\sum_j \int_{\mathbb{R}}{\rm e}^{\alpha\beta y}V_{k,j}({\rm d}y) \int_0^{\infty} t^{\beta-1}\mathbf{P}_{0,j} (I\in {\rm d}t)\\
&=&\beta\sum_j \mathbb{E}_j[I^{\beta-1}]\int_{\mathbb{R}}{\rm e}^{\alpha\beta y}V_{k,j}({\rm d}y)\\
&=& \beta\sum_j \mathbf{E}_{0,j}[I^{\beta-1}]\int_0^{\infty} {\rm d}s\int_{\mathbb{R}}{\rm e}^{\alpha\beta y}\mathbf{E}_{0,k}[\xi(s) \in {\rm d}y,J(s) =j]\\
&=&\beta\sum_j \mathbf{E}_{0,j}[I^{\beta-1}]\int_0^{\infty}(\exp\{tF(\alpha\beta)\})_{k,j} {\rm d}t\\
&=&  \beta\sum_j \mathbf{E}_{0,j}[I^{\beta-1}](\boldsymbol{F}(\alpha\beta)^{-1})_{k,j} .
\end{eqnarray*}
where the right-hand side uses the fact that $\beta\in (0,\theta/\alpha)$. We deduce that $\mathbf{E}_{0,k}[I^{\beta-1}]<\infty$ for $k= \pm1$ implies that $\mathbf{E}_{0,k}[I^{\beta}]<\infty$ for $k= \pm1$.

If $n$ is the smallest non-negative integer such that $\theta/\alpha-n \in (0,1]$, we can use Proposition 3.6 from \cite{KKPW} again, to deduce that $\mathbf{E}_{0,k}[I^{\theta/\alpha-n}]<\infty$. The  argument in the previous paragraph can now be used inductively to conclude that 
$\mathbf{E}_{0,k}[I^{\theta/\alpha-1}]<\infty$.
\end{proof}

\begin{proof}[Proof of Theorem \ref{tailtheorem}.] We break the proof into three cases. 
We start by assuming that $\theta<\alpha$. In that case, referring to \eqref{pre-MART}, we have, assuming the limit exists on the right-hand side,   
\begin{eqnarray}
\lefteqn{\lim_{t\to\infty}{\rm e}^{(\theta-\alpha)t}\int_{0}^{{\rm e}^{\alpha t}}\mathbf{P}_{0,i}(I>s){\rm d}s}&&\notag\\
&&=\lim_{t\to\infty}v_i(\theta)\sum_{j=\pm1}  \frac{1}{v_j(\theta)}\int_{\mathbb{R}}R^{\theta}_{i,j}({\rm d}y){\rm e}^{(\theta-\alpha)(t-y)}\mathbf{P}_{0,j}(I \in [0,{\rm e}^{\alpha(t-y)}])\notag\\
&&=\lim_{t\to\infty}v_i(\theta)\sum_{j=\pm1}  \frac{1}{v_j(\theta)}\int_{\mathbb{R}}R^{\theta}_{i,j}({\rm d}y)
g_j(t-y)
\label{*}
\end{eqnarray}
where
\[
g_k(y)=\frac{1}{v_k(\theta)}{\rm e}^{(\theta-\alpha) y}\int_0^{{\rm e}^{\alpha y}}\mathbf{P}_{0,k}(I\in {\rm d}s) , \qquad k = \pm1, y\in\mathbb{R}.
\]
Note in particular that 
\[
\int_{\mathbb{R}}g_k(y){\rm d}y=\frac{1}{ (\alpha -\theta)v_k(\theta)}\mathbf{E}_{0,k} [I^{\theta/\alpha-1} ], \qquad k=\pm1,
\]
which is finite by Lemma \ref{Ifinite}. Moreover, since $g_k(x)$ is product of an exponential and a monotone function, it is a standard exercise to show that it is also directly Riemann integrable.

The discrete-time Markov Additive Renewal Theorem \ref{prop} in the Appendix now justifies the limit in  \eqref{*} so that 
\begin{eqnarray}
\lim_{t\to\infty}{\rm e}^{(\theta-\alpha)t}\int_{0}^{{\rm e}^{\alpha t}}\mathbf{P}_{0,i}(I>s){\rm d}s&=&v_i(\theta)\sum_{j=\pm1}  \frac{\pi_j^\theta}{\mu_{\theta}|\alpha-\theta|v_j(\theta)}\mathbf{E}_{0,j} [I^{\theta/\alpha-1} ],
\label{logit}
\end{eqnarray}
provided $\mu_\theta<\infty$. This last condition is easily verified as a consequence of assumption (A). Indeed, according to Corollary 2.5 of Chapter XI in  \cite{Asm-apq2}, we have 
\[
\mu_\theta = \chi'(\theta) + \boldsymbol{\pi}^\theta\cdot\boldsymbol{k}^\theta -  \boldsymbol{\pi}^\theta\cdot (\boldsymbol{Q}^\theta- \boldsymbol{I})^{-1}\boldsymbol{k}^\theta,
\]
where $\boldsymbol{Q}^\theta = \boldsymbol{F}_\theta(0)$ is the intensity matrix of $J$ under $\mathbf{P}^\theta$.
 Writing the limit in (\ref{logit}) with a change of variables, we have 
\[
\lim_{u\to\infty}u^{(\theta/\alpha-1 )}\int_{0}^{u}\mathbf{P}_{0,i}(I>s){\rm d}s=v_i(\theta)\sum_{j=\pm1}  \frac{\pi_j^\theta}{\mu_{\theta}|\alpha-\theta|v_j(\theta)}\mathbf{E}_{0,j} [I^{\theta/\alpha-1} ],
\]
{\color{black}which shows, for each $i,$} regular variation of the integral on the left-hand side. Appealing to the monotone density theorem for regularly varying functions, we now conclude that 
\[
\mathbf{P}_{0,i}(I>u)\sim u^{-\theta/\alpha} v_i(\theta)\sum_{j=\pm1}  \frac{\pi_j^\theta}{\mu_{\theta}|\alpha-\theta|v_j(\theta)}\mathbf{E}_{0,j} [I^{\theta/\alpha-1} ], \qquad u\to\infty,
\]
and the result for the case that $\theta<\alpha$ now follows from (\ref{T0andI}).

\bigskip

The proof  for the case $\theta>\alpha$ is completed by starting the reasoning as with the case of $\theta<\alpha$ but with $A = (1,\infty)$ in (\ref{pre-MART}). The desired asymptotics again comes from the first term on the right-hand side of (\ref{pre-MART}) using a similar application of the Markov Additive Renewal Theorem \ref{prop}. The details are left to the reader. The second term on the right-hand side of (\ref{pre-MART}) becomes negligible since 
\[
\lim_{t\to\infty}{\rm e}^{(\theta-\alpha)t}\mathbf{P}_{0,j}(I >{\rm e}^{\alpha t})
 = 0
\]
on account of the fact that $\mathbf{E}_{0,i}[I^{\theta/\alpha -1}]<\infty$.

The case that $\alpha=\theta$ is  dealt with similarly by starting from (\ref{pre-MART}) but now setting $A = (1,\lambda)$ for some $\lambda>1$. In that case, the second term on the right-hand side of (\ref{pre-MART}) makes no contribution to the limit in question since
\[
\lim_{t\to\infty}\mathbf{P}_{0,j}(I > {\rm e}^{\alpha t}) = 0.
\]
The integral in the first term on the right-hand side of (\ref{pre-MART}) can be written in the form 
\[
\int_{\mathbb{R}}R^{\theta}_{i,j}({\rm d}y)\mathbf{P}_{0,j}(I \in A{\rm e}^{\alpha(t-y)})
=\int_{\mathbb{R}}\mathbf{P}_{0,j}(I\in {\rm d}v)R^\theta_{i,j}(t - \alpha^{-1}\log v, t-\alpha^{-1}\log v + \alpha^{-1}\log \lambda).
\]
Thanks to Lemma 3.5 of \cite{Als2014}, we have the uniform estimate 
\[
\sup_{x\in\mathbb{R}}R^\theta_{i,j}(x, x+ \alpha^{-1}\log \lambda)\leq \pi^\theta_jR^\theta_{i,i}(-\alpha^{-1}\log \lambda,  \alpha^{-1}\log \lambda).
\]
This result is accompanied by the classical form of the Markov Additive Renewal Theorem (c.f Theorem 3.1 of \cite{Als2014}), which states that 
\[
\lim_{x\to\infty}R^\theta_{i,j}(x, x+ \alpha^{-1}\log \lambda) = \pi^\theta_j\frac{\log \lambda}{\alpha\mu_\theta}.
\]
This allows us to apply the dominated convergence and note, in conjunction with the classical form of the Markov Additive Renewal Theorem (c.f Theorem 3.1 of \cite{Als2014}) that 
\[
\lim_{t\to\infty}\int_{\mathbb{R}}R^{\theta}_{i,j}({\rm d}y)\mathbf{P}_{0,j}(I \in A{\rm e}^{\alpha(t-y)}) =  \pi^\theta_j\frac{\log \lambda}{\alpha\mu_\theta}.
\]
Plugging this limit back into the first term on the right-hand side of (\ref{pre-MART}) provides the necessary convergence to complete the proof in the same way as the previous two cases. The details are again left to the reader.
\end{proof}

\begin{proof}[Proof of Theorem \ref{two}.] {\color{black}We use similar reasoning to the proof of Theorem \ref{one}
 to pass the limit through the expectation on the right-hand side of (\ref{conditionedtime}) and derive the result. }
\end{proof}

\section*{Appendix: Markov additive renewal theory}

\renewcommand{\thesubsection}{\Alph{subsection}}

 \renewcommand{\theequation}{A.\arabic{equation}}
  \renewcommand{\thetheorem}{A.\arabic{theorem}}

  \setcounter{equation}{0}  
    \setcounter{theorem}{0}  

%
%

Consider a discrete-time stochastic process described by the pair $(\Delta, M): =((\Delta_{n},M_{n}) )_{n\geq 0}$, where $\Delta_{n}$ takes real (or just positive) values and $M_n$ takes  values in the set $E : = \{1,2,\dots,N\}$.
We shall specify the law of such a process as follows.

Set $\Delta_0 = 0$. For each $i,j\in E$, there is a probability distribution $P_{i,j}(x)$ such that, 
 conditioning on the history of $(\Delta, M)$ up to time $n-1$, the distribution of $(\Delta_n, M_n)$ is given by
\[
 {\rm P}(M_n=j,\Delta_n \leq x|(M_k,X_k),k=0,\dots,n-1)=P_{M_{n-1},j}(x).
 \]
In this sense, we have that the the process $M = \{M_n: n\geq 0\}$ alone is a Markov chain on $E$ with transition matrix $p_{i,j}: = P_{i,j}(\infty)$, for $i,j\in E$.
The possibility that $p_{ii}>0$ is not excluded here. 

The distribution of $\Delta_n$ only depends on the state at time $n-1$ which makes the discrete-time Markov additive process
\[
\Xi_n: = \sum_{k = 0}^n \Delta_k, \qquad n\geq 0,
\]
the analogue of a Markov additive random walk (or Markov additive renewal process if the increments are all positive). 

\medskip

To state a classical renewal result for discrete-time Markov additive processes, we need to introduce a little more notation. 
The mean 
 transition is given by 
\[
\eta_i =\sum_{j\in E}\int_{\mathbb{R}}xP_{i,j} ({\rm d}x), \qquad i\in E
\]
Moreover,  the measure $R_{i,j}$ denotes the occupation measure 
\[
R_{i,j}(x)=\sum_{n =1}^\infty {\rm P}(\Xi_n\leq x,\,M_n=j|M_0=i).
\]
The following discrete-time Markov additive renewal theorem is lifted from  Proposition 9.3 in \cite{JM}. 
\begin{theorem}[Markov Additive Renewal Theorem]\label{prop}Given a sequence of functions $g_1,g_2,\dots,g_{N}$ that are directly Riemann integrable, we have, for $j\in E$,
\begin{equation}
\lim_{t\to \infty}\int_{\mathbb{R}}g_j (t-s)R_{i,j}({\rm d}s)=\frac{\pi_j \int_{0}^{\infty}g_{j}(y){\rm d}y}{\sum_{j=1}^N\pi_j\eta_j},
\end{equation}
as soon as $\sum_{j=1}^N\pi_j\eta_j\in(0,\infty)$, 
where $\pi_i$ is the stationary distribution for the chain $M$.
\end{theorem}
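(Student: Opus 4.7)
The plan is to reduce the Markov additive renewal theorem to Blackwell's/Feller's classical (scalar) key renewal theorem via regeneration at the visits of $M$ to a distinguished state. Fix $j\in E$. Working under $\mathrm{P}_{j}(\cdot):=\mathrm{P}(\,\cdot\mid M_{0}=j)$, set $\sigma_{0}=0$ and $\sigma_{k}=\inf\{n>\sigma_{k-1}:M_{n}=j\}$ for $k\ge 1$. Irreducibility and positive recurrence of the finite chain $M$ give $\sigma_{k}<\infty$ a.s., and the strong Markov property of $M$ makes the random blocks indexed by $(\sigma_{k-1},\sigma_{k}]$ i.i.d., so in particular $Y_{k}:=\Xi_{\sigma_{k}}-\Xi_{\sigma_{k-1}}$ are i.i.d. with mean $\mu_{j}:=\mathrm{E}_{j}[\Xi_{\sigma_{1}}]$.

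Next I would compute $\mu_{j}$ by a standard Kac-type identity: decomposing $\Xi_{\sigma_{1}}=\sum_{n=1}^{\sigma_{1}}\Delta_{n}$ and conditioning on $M_{n-1}$ gives $\mathrm{E}_{j}[\Xi_{\sigma_{1}}]=\sum_{k\in E}\eta_{k}\,\mathrm{E}_{j}[\#\{1\le n\le \sigma_{1}:M_{n-1}=k\}]$, and by Kac's formula for the ergodic chain $M$, $\mathrm{E}_{j}[\#\{1\le n\le\sigma_{1}:M_{n-1}=k\}]=\pi_{k}/\pi_{j}$. Hence $\mu_{j}=\pi_{j}^{-1}\sum_{k}\pi_{k}\eta_{k}$, which is finite and positive by hypothesis and which already identifies the right-hand constant of the theorem.

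For the case $i=j$, visits of $(\Xi,M)$ to level-set $\{\cdot\}\times\{j\}$ occur precisely along the subsequence $(\sigma_{k})_{k\ge 1}$, so the occupation measure decomposes as $R_{j,j}(\mathrm{d}x)=U_{j}(\mathrm{d}x)-\delta_{0}(\mathrm{d}x)$, where $U_{j}(\mathrm{d}x):=\sum_{k\ge 0}\mathrm{P}_{j}(\Xi_{\sigma_{k}}\in \mathrm{d}x)$ is the classical renewal measure of the i.i.d.\ walk with step law $Y_{1}$ and mean $\mu_{j}$. Since the $g_{j}$ are directly Riemann integrable (which presupposes the non-arithmetic step law so that the ordinary key renewal theorem applies), Blackwell's theorem gives $\int g_{j}(t-s)U_{j}(\mathrm{d}s)\to \mu_{j}^{-1}\int_{0}^{\infty}g_{j}(y)\,\mathrm{d}y=\pi_{j}\big(\sum_{k}\pi_{k}\eta_{k}\big)^{-1}\int g_{j}$; the $\delta_{0}$ piece contributes $g_{j}(t)\to 0$ and is negligible. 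The non-arithmeticity hypothesis on $(Y_{k})$ must be checked, and this is the main technical point: standard arguments show that in an irreducible MAP the induced i.i.d.\ walk $(Y_{k})$ inherits the non-arithmetic span of $(\Delta_{n})$; in the application to our paper this is supplied by the lattice-free condition in assumption \textbf{(A)}.

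For the case $i\neq j$, let $T=\inf\{n\ge 1:M_{n}=j\}$, finite $\mathrm{P}_{i}$-a.s.\ by irreducibility. By the strong Markov property applied at $T$, the post-$T$ process has law $\mathrm{P}_{j}$ translated by $\Xi_{T}$, whence $R_{i,j}(\mathrm{d}x)=\int \mathrm{P}_{i}(\Xi_{T}\in\mathrm{d}u)\,U_{j}(\mathrm{d}x-u)$ and therefore
\[
\int g_{j}(t-s)\,R_{i,j}(\mathrm{d}s)=\mathrm{E}_{i}\Big[\int g_{j}\bigl((t-\Xi_{T})-v\bigr)\,U_{j}(\mathrm{d}v)\Big].
\]
The inner integrand converges pointwise in $u=\Xi_{T}$ to $\pi_{j}\bigl(\sum_{k}\pi_{k}\eta_{k}\bigr)^{-1}\int g_{j}$ by the first case, and is uniformly bounded in $t$ and $u$ by Feller's classical uniform bound $\sup_{t}\int g_{j}(t-v)\,U_{j}(\mathrm{d}v)<\infty$ valid for DRI $g_{j}$. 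Dominated convergence then delivers the limit. The hardest step is the non-arithmeticity reduction, together with producing the uniform DRI-bound on convolutions with $U_{j}$; both are classical but must be stated carefully, and together they are what license the swap of limit and the $\mathrm{E}_{i}$-expectation in the final display.
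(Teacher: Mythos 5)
The paper does not prove this theorem: it simply cites Proposition 9.3 of Janssen and Manca (reference [JM]). So there is no "paper proof" to compare against; your proposal supplies the argument that the paper delegates to the literature. That said, your argument is correct and it is the standard way to prove Markov additive renewal results: regenerate at successive returns $\sigma_{k}$ of the modulating chain to a fixed state $j$, observe that the embedded sequence $Y_{k}=\Xi_{\sigma_{k}}-\Xi_{\sigma_{k-1}}$ is i.i.d.\ by the strong Markov property, compute $\mu_{j}=\mathrm{E}_{j}[\Xi_{\sigma_{1}}]=\pi_{j}^{-1}\sum_{k}\pi_{k}\eta_{k}$ via a Wald-type identity combined with Kac's formula, identify $R_{j,j}$ as the renewal measure of this walk less a $\delta_{0}$ at the origin, apply the classical key renewal theorem, and treat $i\neq j$ as a delayed renewal by writing $R_{i,j}$ as the convolution of $\mathrm{P}_{i}(\Xi_{T}\in\cdot)$ with $U_{j}$. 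The two care points you single out are exactly the right ones: (i) the theorem as stated in the paper silently omits the non-arithmeticity hypothesis that the embedded walk inherits from the increments, which is needed for Blackwell's theorem and is provided in the paper by its standing assumption \textbf{(A)}; and (ii) the final swap of limit and $\mathrm{E}_{i}$-expectation requires the uniform bound $\sup_{t}\int g_{j}(t-v)\,U_{j}(\mathrm{d}v)<\infty$, which follows from direct Riemann integrability plus the uniform control on $U_{j}$ of bounded intervals that Blackwell's theorem gives. One very minor point worth stating explicitly: the Wald/Kac computation for $\mu_{j}$ uses optional stopping for the martingale $\Xi_{n}-\sum_{m=1}^{n}\eta_{M_{m-1}}$ at $\sigma_{1}$, and this is justified because the finite irreducible chain has $\mathrm{E}_{j}[\sigma_{1}]<\infty$ and the theorem implicitly requires each $\eta_{k}$ (indeed each first absolute moment $\sum_{\ell}\int\abs{x}P_{k,\ell}(\mathrm{d}x)$) to be finite; spelling this out would close the only loose seam in the write-up.
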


\section*{Acknowledgements}

AEK  acknowledges support from EPSRC grant number EP/L002442/1. AEK and VR acknowledge support from EPSRC grant number EP/M001784/1. VR acknowledges support from CONACyT grant 234644. This work was undertaken whilst VR was on sabbatical at the University of Bath, he gratefully acknowledges the kind hospitality of the Department and University.


\begin{thebibliography}{99}

\bibitem{Als2014}G. Alsmeyer:   Quasistochastic matrices and Markov renewal theory. 
\newblock {\it J. Appl. Prob.}, 51, 359-376, 2014.


\bibitem{AS} E. Arjas and T. P. Speed: 
\newblock Symmetric Wiener-Hopf factorisations in Markov Additive Processes.pdf
\newblock {\it Z.W.}, 26, 105-118, 1973.


\bibitem{AR} J. Arista and V. Rivero:
Implicit Renewal Theory for exponential functional of L\'evy processes. {\it Preprint}.



\bibitem{Asm-rp1}
S.~Asmussen and H. Albrecher:
\newblock \emph{Ruin probabilities}, volume~14 of \emph{Advanced Series on
  Statistical Science \& Applied Probability}.
\newblock World Scientific Publishing Co. Pte. Ltd., Singapore, 2010.

\bibitem{Asm-apq2}
S.~Asmussen:
\newblock \emph{Applied probability and queues}, volume~51 of
  \emph{Applications of Mathematics (New York)}.
\newblock Springer-Verlag, New York, second edition, 2003.






\bibitem{BG-mppt}
R.~M. Blumenthal and R.~K. Getoor:\newblock
\newblock \emph{Markov processes and potential theory}.
\newblock Pure and Applied Mathematics, Vol. 29. Academic Press, New York,
  1968.
  
\bibitem{BZ} T. Bogdan and T. Zak: \newblock On Kelvin Transformation. \newblock \emph{J. Theor. Probab.}, 19, 89-120, 2006.


\bibitem{Chaumont96} L. Chaumont: \newblock Conditionings and path decompositions for L\'evy processes. \newblock {\it Stoch. Proc. Appl.} 64, 39-54, 1996.

\bibitem{CPR} L. Chaumont, H. Panti, and V. Rivero: \newblock The Lamperti representation of real-valued
self-similar Markov processes. \newblock {\it Bernoulli}, 19, 2494-2523, 2013.

\bibitem{CR} L. Chaumont and V. Rivero: \newblock
On some transformations between positive self-similar Markov processes. 
\newblock    \emph{Stoch. Proc. Appl.}, 117, No. 12, 1889-1909, 2007.

\bibitem{Chy-Lam}
O.~Chybiryakov:
\newblock The {L}amperti correspondence extended to {L}\'evy processes and
  semi-stable {M}arkov processes in locally compact groups.
\newblock \emph{Stoch. Proc. Appl.}, {\bf 116}, 
  857-872, 2006.


\bibitem{Cinlar1} E. \c{C}inlar: 
\newblock  Markov additive processes II
\newblock {\it Z.W.}, {\bf 24}, 95- 121, 1972.

\bibitem{Cinlar2} E. \c{C}inlar:
\newblock Levy systems of Markov additive processes
\newblock {\it Z.W.}, {\bf 31}, 175- 185, 1975.



\bibitem{DDK} S. Dereich, L. Doering and A. E. Kyprianou:  \newblock Real Self-Similar Processes Started from the Origin.  \newblock {\it Preprint.}

\bibitem{Iva-thesis}
J.~Ivanovs:
\newblock \emph{One-sided Markov additive processes and related exit problems}.
\newblock PhD thesis, Universiteit van Amsterdam, 2011.

\bibitem{JM} J. Janssen and R.  Manca: \newblock \emph{Semi-Markov Risk Models for Finance, Insurance and Reliability}, Springer, 2007.

\bibitem{kesten} H. Kesten:  \newblock Hitting probabilities of single points for processes with stationary independent
increments. 
\newblock {\it Mem. Am. Math. Soc.} 93, 1-129, 1969.

\bibitem{KKPW} A.  Kuznetsov, A. E. Kyprianou, J.C. Pardo, A.R. Watson: \newblock The hitting time of zero for a stable process. {\it Electronic Journal of Probability},  19, 1-26, 2014.


\bibitem{K} A. E. Kyprianou: \newblock Deep factorisation of the stable process. \newblock {\it Preprint.}





\bibitem{KPW} A. E. Kyprianou, A. Watson, J. C. Pardo: \newblock Hitting distributions of alpha-stable processes via path censoring and self-similarity. \newblock
{\it Ann.  Probab.}, 42, 398-430, 2014.





\bibitem{PS} C. Profeta and T. Simon: \newblock The harmonic measure of stable processes. {\it Preprint.}




\bibitem{R} V. Rivero: Tail asymptotics for exponential functionals of Levy processes: the convolution equivalent case. {\it Ann.  Inst. H. Poincar\'e}, 48, 1081-1102, 2012.






\end{thebibliography}
\end{document}